\documentclass[12pt,a4paper,leqno]{amsart}
\usepackage{amssymb,hyperref}
\usepackage{array,float}
\usepackage{amsmath,amsthm,amsbsy,amscd,mathrsfs}
\usepackage{graphicx}
\usepackage{mathtools}

\headheight=5pt \textheight=600pt \textwidth=450pt \topmargin=14pt \oddsidemargin=6pt\evensidemargin=6pt
\makeatletter
\newcommand\suchthat{%
 \@ifstar
  {\mathrel{}\middle|\mathrel{}}
  {\mid}%
}
\makeatother






\newtheorem{thm}{Theorem}[section]
\newtheorem{cor}[thm]{Corollary}
\newtheorem{lem}[thm]{Lemma}
\newtheorem{prop}[thm]{Proposition}



\theoremstyle{definition}
\newtheorem{defi}[thm]{Definition}
\newtheorem{re}[thm]{Remark}

\theoremstyle{remark}
\newtheorem{alg}[thm]{Algorithm}




\numberwithin{equation}{section}

 \DeclareMathOperator{\Pic}{Pic}

 \DeclareMathOperator{\Tr}{Tr}  
 \DeclareMathOperator{\co}{covol}  
  \DeclareMathOperator{\vo}{vol} 
\DeclareMathOperator{\disc}{disc}

\title[On reduced Arakelov divisors of real quadratic fields]{ON REDUCED ARAKELOV DIVISORS OF REAL QUADRATIC FIELDS}

\author[Ha Thanh Nguyen Tran]{Ha Thanh Nguyen Tran} 
\address{Department of Mathematics and Systems Analysis,
Aalto University School of Science\\
Otakaari 1, 02150 Espoo, Finland.}
\email{hatran1104@gmail.com}


\keywords{Arakelov, divisor, reduced}


\begin{document}

\maketitle

\begin{abstract}
We generalize the concept of reduced Arakelov divisors and define $C$-reduced divisors for a given number $C \geq 1$. These $C$-reduced divisors have remarkable properties which are similar to the  properties of reduced ones. 
In this paper, we describe an algorithm to test whether an Arakelov divisor of a real quadratic field $F$ is $C$-reduced in time polynomial in $\log|\Delta_F|$ with $\Delta_F$ the discriminant of $F$. Moreover, we give  an example of a cubic field for which our algorithm does not work.
\end{abstract}

\section{Introduction}
The idea of infrastructure of real quadratic fields of Shanks in \cite{ref:7} was modified and extended by  Lenstra \cite{ref:5}, Schoof \cite{ref:8} and Buchmann and Williams \cite{ref:10} to certain number fields. Finally, it was generalized to arbitrary number fields by Buchmann \cite{ref:9}. In 2008, Schoof \cite{ref:4} gave the first description of infrastructure in terms of reduced Arakelov divisors and the Arakelov class group $\Pic^0_F$ of a general number field $F$. Reduced Arakelov divisors can be used for computing $\Pic^0_F$. They form a finite and regularly distributed set in this topological group {\cite[Propostion 7.2, Theorem 7.4 and 7.7]{ref:4}}. Computing $\Pic^0_F$ is of interest because knowing this group is equivalent to knowing the class group and the unit group of $F$ (see \cite{ref:2} and \cite{ref:4}).

Schoof proposed two algorithms which run in polynomial time in $\log{|\Delta_F|} $ with $\Delta_F$ the discriminant of $F$ {\cite[Algorithm 10.3]{ref:4}}: the testing algorithm to check whether a given  Arakelov divisor $D$ is reduced,  and the reduction algorithm to compute a reduced Arakelov divisor that is close to a given divisor $D$ in $\Pic^0_F$. However, the reduction algorithm requires finding a shortest vector of the lattice associated to the Arakelov divisor, while finding a reasonably short vector using the LLL algorithm is much faster and easier than finding a shortest vector. This leads to modifications and generalizations of the definition of reduced Arakelov divisors.

One of the generalizations, which we call $C$-reduced Arakelov divisors, comes from the reduction algorithm of Schoof  {\cite[Algorithm 10.3]{ref:4}}.
With this definition, $C$-reduced Arakelov divisors are reduced in the usual sense when $C =1$, and Arakelov divisors that are reduced in the usual sense are $C$-reduced with $C = \sqrt{n}$ (see \cite{ref:4}). $C$-reduced divisors still form a finite and regularly distributed set in $\Pic^0_F$, just like the reduced divisors.

This modification, however, has a drawback, since for general number fields it is not known how to test whether a given divisor is $C$-reduced. Currently, we have a testing algorithm to do this only for real quadratic fields, in time polynomial in $\log{\left(|\Delta_F|\right)}$. It is the  main result of this paper, presented in Section 4.

In Section 2, we discuss  $C$-reduced Arakelov divisors in an arbitrary number field. Section 3 is devoted to the properties of $C$-reduced fractional ideals of real quadratic fields. An example of real cubic fields in which the testing algorithm is no longer efficient is given in Section 5.

\section{$C$-reduced Arakelov divisors}
In this section, we introduce $C$-reduced Arakelov divisors of number fields.

Let $F$ be a number field of degree $n$ and $r_1, r_2$ the numbers of real and complex infinite primes (or infinite places) of $F$, respectively. 
 Let 
 $$F_{\mathbb{R}}: = F \otimes_\mathbb{Q}\mathbb{R} \simeq \prod_{\sigma \text{ real}}\mathbb{R} \times \prod_{\sigma \text{ complex}}\mathbb{C}.$$
  Here $\sigma$'s are the infinite primes of $F$. Then $F_{\mathbb{R}}$ is an \'{e}tale $\mathbb{R}$-algebra with the canonical Euclidean structure given by the scalar product\\
 $\hspace*{3cm}\langle u, v \rangle := \Tr(u \overline{v}) $ \text{ for } $u = (u_{\sigma})_{\sigma}, v = (v_{\sigma})_{\sigma} \in F_{\mathbb{R}}$.\\
 In particular, in terms of coordinates, we have
  $$\|u\|^2= \Tr(u \overline{u})= \sum_{\sigma \text{ real} } |u_{\sigma}|^2 + 2 \sum_{\sigma \text{ complex}  } |u_{\sigma}|^2, \text{ for any } u = (u_{\sigma})_{\sigma} \in F_{\mathbb{R}}.$$
  The \textit{norm} of an element $u = (u_{\sigma})_{\sigma} $ of $F_{\mathbb{R}}$ is defined by 
 $$N(u):= \prod_{\sigma \text{ real} }u_{\sigma} \cdot \prod_{\sigma \text{ complex} } |u_{\sigma}|^2.$$

  \begin{defi}
 An \textit{Arakelov divisor} is a formal finite sum
  $$D=\sum\limits_{\mathfrak{p}}n_{\mathfrak{p}}\mathfrak{p} +\sum_{\sigma}x_{\sigma}\sigma$$
    where $\mathfrak{p}$ runs over the nonzero prime ideals in $ O_F$ and $\sigma$ runs over the infinite primes of $F$, with $n_{\mathfrak{p}} \in \mathbb{Z}$ but  $x_{\sigma} \in \mathbb{R}$.
 \end{defi}

 To each divisor $D$ we associate the \textit{Hermitian line bundle}  $ (I, u)$  
 where $I = \prod_{\mathfrak{p}}\mathfrak{p}^{-n_{\mathfrak{p}}}$ is a fractional ideal in $F$ and 
 $u = (e^{-x_{\sigma}})_{\sigma} $ is a vector in $ \prod_{\sigma}\mathbb{R}_{>0} \subset F_{\mathbb{R}}$.

 There is a natural way to associate an ideal lattice to $D$. Indeed, $I$ is embedded into $F_{\mathbb{R}}$ by the infinite primes $\sigma$. Each element $g$ of $I$ is mapped to the vector $(\sigma(g))_{\sigma}$ in  $F_{\mathbb{R}}$. 
 Since the vector $ u g: =  (u_{\sigma} \sigma(g) )_{\sigma} \in F_{\mathbb{R}}$, we can define 
  $$\|g\|_D:= \|u g\|.$$
  In terms of coordinates, we have
  $$\|g\|_D^2= \sum_{\sigma \text{ real } }  u_{\sigma}^2 |\sigma(g)|^2 + 2\sum_{\sigma \text{ complex }} |u_{\sigma}|^2|\sigma(g)|^2.$$
 With this metric, $I$ becomes an ideal lattice in $F_{\mathbb{R}}$. We call $I$ \textit{the ideal lattice associated to $D$}. The vector $u$ has the role of a metric for $I$. Hence we make the following definition.

 \begin{defi}
 Let $I $ be a fractional ideal in $F$ and let $u$ be in  $F_{\mathbb{R}}^*$. 
 The \textit{length of an element $g$ of $I$ with respect  to the metric $u$} is defined by $\|g\|_u:= \|u g\| $.
 \end{defi}

\begin{defi}
Let $I$ be a fractional ideal. Then 1 is called  
\textit{primitive } in $I$ if $1$ belongs to $I$ and it is not divisible by any integer $\ge 2$.
\end{defi}

\begin{defi}\label{def:1}
 Let $C \geq 1$. A fractional ideal $I$ is called $C$-\textit{reduced} if:
\begin{itemize}
\item $1$ is primitive in $I$.
\item  There exists a metric $u \in \prod_{\sigma}\mathbb{R}_{>0}$  such that$  \|1\|_u \leq C\|g\|_u $  for all $ g \in I\backslash \{0\}$.
\end{itemize}
\end{defi}

\begin{re}

The second condition of Definition \ref{def:1} is equivalent to saying that there exists a metric $u$ such that with respect to this metric, the vector $1$ scaled by the scalar $C$ is a shortest vector in the lattice $I$.

\end{re}

\begin{defi} 
	Let $I$ be a fractional ideal in $F$. The\textit{ Arakelov divisor $d(I)$} is defined to be associated with the Hermitian line bundle $(I,u)$ where $u = (u_{\sigma})_{\sigma} $ with $u_{\sigma} =  N(I)^{-1/n}$ for all $\sigma$. 
\end{defi}

\begin{defi}
An Arakelov divisor $D$ is called $C$-\textit{reduced} if it has the form $D = d(I)$  for some $C$-reduced fractional ideal $I$.
\end{defi}

Now we prove the following lemma. 
\begin{lem}\label{prop:finite}
Let $I$ be a fractional ideal. If $I$ is $C$-reduced then the inverse $I^{-1}$ of $I$ is an integral ideal and its norm is at most $C^n \partial_F$ where $\partial_F = (2/\pi)^{r_2}\sqrt{|\Delta_F|}$.
\end{lem}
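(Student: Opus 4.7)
The plan is to split the lemma into its two assertions and to apply Minkowski's theorem to the box $\prod_\sigma\{|x_\sigma|\le c_\sigma\}\subset F_{\mathbb{R}}$ rather than to a Euclidean ball. The virtue of this choice is that the volume of the box is $2^{r_1}\pi^{r_2}\prod_\sigma c_\sigma^{n_\sigma}$ (with $n_\sigma\in\{1,2\}$ at real/complex primes), which is shaped precisely so that, against the covolume $2^{-r_2}\sqrt{|\Delta_F|}\,N(I)$ of $I$, the ratio produces the target constant $\partial_F=(2/\pi)^{r_2}\sqrt{|\Delta_F|}$.

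Integrality of $I^{-1}$ is immediate from the first bullet of Definition~\ref{def:1}: since $1\in I$, we have $O_F=O_F\cdot 1\subseteq I$, hence $I^{-1}\subseteq O_F$. Thus $N(I^{-1})=N(I)^{-1}$, and it suffices to prove $N(I)\ge 1/(C^n\partial_F)$.

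Fix a metric $u\in\prod_\sigma\mathbb{R}_{>0}$ witnessing that $I$ is $C$-reduced. Minkowski's theorem applied to the box above shows that for any $c_\sigma>0$ with $\prod_\sigma c_\sigma^{n_\sigma}\ge \partial_F\,N(I)$ there is a nonzero $g\in I$ satisfying $|\sigma(g)|\le c_\sigma$ for every $\sigma$. I would then set $c_\sigma=k/u_\sigma$ with $k^n=\partial_F\,N(I)\prod_\sigma u_\sigma^{n_\sigma}$, so that $u_\sigma|\sigma(g)|\le k$ in each coordinate and
\[
\|g\|_u^2=\sum_{\sigma\text{ real}}u_\sigma^2|\sigma(g)|^2+2\sum_{\sigma\text{ complex}}u_\sigma^2|\sigma(g)|^2 \le n k^2.
\]
The $C$-reduced hypothesis then gives $\|1\|_u\le C\|g\|_u\le C\sqrt{n}\,k$, while the weighted AM--GM inequality applied to $\|1\|_u^2=\sum_\sigma n_\sigma u_\sigma^2$ (weights $n_\sigma$ summing to $n$) yields the matching lower bound $\|1\|_u\ge \sqrt{n}\,(\prod_\sigma u_\sigma^{n_\sigma})^{1/n}$. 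Combining the two, the factors $\sqrt{n}$ and $(\prod_\sigma u_\sigma^{n_\sigma})^{1/n}$ cancel cleanly, and what remains is $1\le C(\partial_F\,N(I))^{1/n}$, i.e.\ $N(I^{-1})=N(I)^{-1}\le C^n\partial_F$.

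The main obstacle is the bookkeeping for the Minkowski body: a Euclidean ball in $F_{\mathbb{R}}$ would introduce a $\Gamma(1+n/2)$-factor and a weaker constant, so the proof really hinges on choosing the product of intervals and disks for the convex body and on taking $c_\sigma=k/u_\sigma$; these two choices together convert the Minkowski inequality into a statement purely about $\partial_F N(I)$ and let the $u$-dependent quantities cancel in the final step.
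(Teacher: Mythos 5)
Your proof is correct and follows essentially the same route as the paper's: integrality of $I^{-1}$ from $1\in I$, Minkowski's theorem applied to a box whose shape yields exactly the constant $\partial_F$, and then the $C$-reduced inequality. The only cosmetic difference is that you scale the box by $u$ and invoke weighted AM--GM to cancel the $u$-dependence, whereas the paper uses a $u$-independent box and the bound $\|uf\|\le\|u\|\max_\sigma|\sigma(f)|$, so that $\|u\|$ cancels directly.
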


\begin{proof}
Since $1 \in I$, we have $I^{-1} \subset O_F$. Then $L =  N(I)^{-1/n} I$ is a lattice of covolume $\sqrt{|\Delta_F|}$ {\cite[Section 4]{ref:4}}. Consider the  symmetric, convex and bounded subset of $F_\mathbb{R}$,
$$S= \{(x_{\sigma})_{\sigma}: |x_{\sigma}| < \partial_F^{1/n} \text{ for all } \sigma \}.$$

For real $\sigma$, the segment $|x_{\sigma}| < \partial_F^{1/n}$ in $\mathbb{R}$ has length $2 \cdot \partial_F^{1/n}$. For complex $\sigma$, the disc $|x_{\sigma}| < \partial_F^{1/n}$ in $\mathbb{C}$ has area $2 \pi (\partial_F^{1/n})^2$. 
Thus, 
$$\vo(S) =(2  \partial_F^{1/n})^{r_1} \cdot ( 2 \pi  (\partial_F^{1/n})^2)^{r_2}= 2^{r_1}(2 \pi)^{r_2} \partial_F = 2^n \co(L).$$
By Minkowski's theorem, there is a nonzero element $f \in I$ such that
$$ N(I)^{-1/n} |\sigma(f)| \leq \partial_F^{1/n} \text{ for all } \sigma.$$
Since $I$ is $C$-reduced, there exists a metric $u$ such that $\|1\|_u \leq C \|f\|_u$. This implies that
$\|u\| \leq C \|u\| \max_{\sigma} | \sigma(f)| \leq C \|u\| \partial_F^{1/n}  N(I)^{1/n}$.
Hence  $N(I^{-1}) \leq C^n \partial_F$.

\end{proof}

\begin{re}\label{input}
	In this paper, given a fractional ideal $I$, we assume
	that it is represented by a matrix  with rational entries as in  {\cite[Section 4]{ref:1}} and  {\cite[Section 2]{ref:2}}.  Without loss of generality, we can also  assume that the length of the input is polynomial in $\log|\Delta_F|$.  
\end{re}

By Lemma \ref{prop:finite}, to test whether $I$ is $C$-reduced, first we can check that $ N(I)^{-1} \leq C^n \partial_F$. We have the following.

\begin{lem}\label{testnorm}
	Testing $ N(I)^{-1} \leq C^n \partial_F$ can be done in time polynomial in $\log|\Delta_F|$. 
\end{lem}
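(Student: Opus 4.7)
The plan is to evaluate both sides of the inequality $N(I)^{-1}\le C^n\partial_F$ to sufficient rational precision and compare them. The left-hand side turns out to be an exact rational that can be computed in polynomial time, while the right-hand side is a fixed real number that can be approximated to any prescribed precision in polynomial time.

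First I would compute $N(I)$ from the matrix representation. By Remark \ref{input}, the ideal $I$ is given by an $n\times n$ rational matrix $M$ of bit length polynomial in $\log|\Delta_F|$ (the columns of $M$ expressing a $\mathbb{Z}$-basis of $I$ in terms of an integral basis of $O_F$), so that $N(I)=|\det M|$. The determinant of such a matrix is computable in polynomial time by standard methods (for example Bareiss's algorithm), yielding $N(I)^{-1}$ as an explicit rational whose numerator and denominator have polynomially bounded bit size.

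Next I would approximate $\partial_F=(2/\pi)^{r_2}\sqrt{|\Delta_F|}$. The integer $|\Delta_F|$ is part of the data describing $F$; the square root $\sqrt{|\Delta_F|}$ can be approximated to $k$ bits of precision via Newton iteration, and $(2/\pi)^{r_2}$ via any rapidly converging series for $\pi$, both in time $\mathrm{poly}(k,\log|\Delta_F|)$. Multiplying by the constant $C^n$ then produces a rational approximation of the right-hand side to $k$ bits.

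The main obstacle I anticipate is ensuring that the precision needed for the final comparison is polynomially bounded. Since $N(I)^{-1}$ is rational of polynomially bounded height, there is a polynomial $p$ such that precision $p(\log|\Delta_F|)$ always decides the inequality unless the two sides are exactly equal; equality is impossible as soon as $|\Delta_F|$ is not a perfect square or $r_2\ge 1$, because then $\partial_F$ is irrational while $N(I)^{-1}$ is rational. In particular, for the real quadratic fields that are the focus of the paper this degenerate case never arises, and the comparison terminates in polynomial time in $\log|\Delta_F|$.
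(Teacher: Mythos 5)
Your proposal is correct for the real quadratic fields that are the subject of the paper, but it takes a genuinely different route from the paper's proof. The paper never approximates $\sqrt{|\Delta_F|}$ numerically: using $N(I)^{-1}=\sqrt{|\Delta_F|}/\mathrm{covol}(I)$, the factor $\sqrt{|\Delta_F|}$ cancels from both sides of $N(I)^{-1}\le C^n\partial_F$, and the test reduces to comparing the exactly computable quantity $|\det(M)|=\mathrm{covol}(I)$ with a threshold depending only on $C$, $n$ and $r_2$ (not on $\Delta_F$ or on $I$); the only computational step is one determinant. You instead evaluate both sides, which forces you to confront the precision question, and your treatment of it is essentially right but slightly under-justified: the claim that precision $p(\log|\Delta_F|)$ ``always decides the inequality'' requires a quantitative separation bound, not merely irrationality of $\partial_F$. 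For $r_2=0$ this is easy to supply --- compare the squares $N(I)^{-2}$ and $C^{2n}|\Delta_F|$, two rationals of polynomially bounded height, which are either equal or differ by at least the reciprocal of the product of their denominators --- so the real quadratic case, where moreover $\Delta_F$ is never a perfect square, is fully covered. For $r_2\ge 1$ your separation bound would involve rational approximations to powers of $\pi$, where a polynomial estimate rests on the finiteness of the irrationality measure of $\pi$; the paper's cancellation removes the $\Delta_F$-dependence from the irrational side and, for the fields actually treated here, turns the test into a purely rational comparison. Both arguments work where the paper needs them; the paper's is cleaner because the irrational quantity disappears before any comparison is made, while yours is more self-contained in that it does not rely on the identity relating $N(I)$, $\mathrm{covol}(I)$ and $\sqrt{|\Delta_F|}$.
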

\begin{proof}

Let $M$ be the matrix representation of $I$. Since we know that 
$N(I)^{-1}=\sqrt{|\Delta_F|}/\co(I)$,
 it is sufficient to check that 
 $$|det(M)|= \co(I)> (\pi/2)^{r_2} C^n.$$
  Recall that the determinant of the matrix $M$ can be computed in time polynomial {\cite[Section 1]{ref:12}}. This reason and Remark \ref{input} imply that testing $ N(I)^{-1} \leq C^n \partial_F$ can be done in time polynomial in $\log|\Delta_F|$. 	
\end{proof}

Regarding the primitiveness of $1$ in $I$, we have the result below.

\begin{lem}\label{lem:pri} 
Let $C \geq 1$ and let $I$ be a fractional ideal containing $1$ with $ N(I)^{-1} \leq C^n \partial_F$. Then testing whether or not $1$ is primitive can be done in time polynomial in $\log{|\Delta_F|}$.
\end{lem}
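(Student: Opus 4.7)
The plan is to reduce the primitivity test for $1$ in $I$ to a single gcd computation over $\mathbb{Z}$. Given any $\mathbb{Z}$-basis $\beta_1,\ldots,\beta_n$ of $I$, the assumption $1 \in I$ yields a unique integer vector $\vec{a}=(a_1,\ldots,a_n) \in \mathbb{Z}^n$ with $1 = \sum_{j=1}^n a_j \beta_j$. The key observation I will use is that, for every positive integer $m$, the element $1/m$ lies in $I$ if and only if $m$ divides every $a_j$: if $1/m = \sum_j b_j \beta_j$ with $b_j \in \mathbb{Z}$, then multiplying by $m$ and invoking uniqueness of the basis expansion forces $a_j = m b_j$. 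Hence $1$ is primitive in $I$ precisely when $\gcd(a_1,\ldots,a_n) = 1$.

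The algorithm I would then propose has three standard steps. First, read off from the input a matrix $M$ with rational entries whose columns express $\beta_1,\ldots,\beta_n$ in a fixed integral basis $\omega_1 = 1, \omega_2,\ldots,\omega_n$ of $O_F$; by Remark \eqref{input}, its bit-size is polynomial in $\log|\Delta_F|$. Second, solve the linear system $M\vec{a} = \vec{e}_1$ over $\mathbb{Q}$, where $\vec{e}_1$ is the coordinate vector of $1$; the solution $\vec{a}$ is automatically integer-valued because $1 \in I$. Third, apply the Euclidean algorithm to $a_1,\ldots,a_n$ and return ``primitive'' if and only if the output is $1$.

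For the complexity, solving an exact rational linear system of this size can be done in polynomial time, for instance by Bareiss-type fraction-free Gaussian elimination, which provably avoids intermediate coefficient blow-up; the final gcd step is plainly polynomial as well. The hypothesis $N(I)^{-1} \leq C^n \partial_F$ is used here to control the output size: the largest $m$ with $1/m \in I$ satisfies $m\, O_F \supseteq I^{-1}$, so $m^n \leq N(I^{-1}) \leq C^n \partial_F$ and therefore $m \leq C \partial_F^{1/n}$, whose logarithm is polynomial in $\log|\Delta_F|$. The only genuinely conceptual ingredient is the opening divisibility equivalence; beyond that the proof reduces to textbook polynomial-time rational linear algebra followed by a Euclidean gcd, and I expect that packaging the standard bit-complexity bounds for these primitives will be the bulk of the write-up rather than any real obstacle.
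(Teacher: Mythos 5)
Your argument is correct, but it takes a different (and in fact more direct) route than the paper. The paper dualizes: it picks LLL-reduced bases of $O_F$ and of $I^{-1}\subset O_F$, writes the transition matrix $(k_{ij})$, and uses the chain $1/d\in I \iff I^{-1}\subseteq dO_F \iff d\mid k_{ij}$ for all $i,j$, so that primitivity becomes $\gcd(k_{ij})=1$. You instead stay inside $I$ itself and test the gcd of the coordinate vector of the single element $1$ in a $\mathbb{Z}$-basis of $I$; your divisibility equivalence ($1/m\in I$ iff $m$ divides every coordinate of $1$) is a purely lattice-theoretic fact proved by uniqueness of basis expansions, whereas the paper's first equivalence genuinely uses that $I$ is an $O_F$-module. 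What your approach buys: no computation of $I^{-1}$, no LLL reduction at all (any basis works, since exact rational linear algebra is polynomial-time), and a single $n$-term gcd instead of an $n^2$-term one. What the paper's buys is essentially nothing extra here, though its formulation via $I^{-1}\subseteq dO_F$ makes the largest such $d$ visibly bounded by $N(I^{-1})^{1/n}$. One small inaccuracy in your write-up: the hypothesis $N(I)^{-1}\leq C^n\partial_F$ is not really what controls your running time --- the bit-sizes of the $a_j$ are already polynomial by Cramer's rule applied to the input matrix --- so your closing justification is a red herring rather than a needed ingredient; but this does not affect correctness.
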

\begin{proof}
Let $\{c_1, ..., c_n\}$ be an LLL-reduced $\mathbb{Z}$-basis of $O_F$ and $\{b_1, ..., b_n\}$ be an LLL-reduced $\mathbb{Z}$-basis of $I^{-1}$. Since $1 \in I$, we obtain $I^{-1} \subset O_F$ and so $b_i \in O_F$ for all $i$. Then for each $i = 1,...,n$, there exist the integers $k_{ij}$ with $j = 1,...,n$ for which $b_i = \sum_i k_{ij}c_j$. Thus, there is an integer $d$ such that $1/d \in I$ if and only if $I^{-1} \subset d O_F$. This is equivalent to $d| k_{ij}$ for all $i,j$. In other words, $d| \gcd(k_{ij}, 1 \leq i,j \leq n)$. In conclusion, $1$ is primitive in $I$ if and only if $\gcd(k_{ij}, 1 \leq i,j \leq n) = 1$. 

Since $ N(I)^{-1} \leq C^n \partial_F$, an LLL-reduced $\mathbb{Z}$-basis of $I$, the coefficients $k_{ij}$ and $\gcd(k_{ij}, 1 \leq i,j \leq n)$ can be computed in polynomial time in $\log{| \Delta_F|}$. In other words, testing the primitiveness of $1$ can be done in time polynomial in $\log{|\Delta_F|}$.
\end{proof}

By Lemma \ref{lem:pri}, we know how to test the first condition of Definition \ref{def:1}. From now on, we only consider the second condition of this definition.  

\begin{re}\label{re:u} 
Note that if $u \in \prod_{\sigma}\mathbb{R}_{>0}$ satisfies the second condition of Definition \ref{def:1}, then  $u' = \left(u_{\sigma}/N(u)^{1/n}\right)_{\sigma}\in \prod_{\sigma}\mathbb{R}_{>0}$  still satisfies that condition and  $N(u')=1$. Therefore, we can always assume that $N(u) =1$ from now on. 
\end{re}

\begin{prop}\label{pro:ub}
Let $I$ be a fractional ideal and $u$ be a vector satisfying the second condition of Definition \ref{def:1} with $N(u) =1$. Then 
$$\|u\| \leq C\sqrt{n} (2/\pi)^{r_2/n}\co(I)^{1/n}.$$
\end{prop}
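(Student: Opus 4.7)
The plan is to adapt the Minkowski-theorem argument of Lemma \ref{prop:finite} to the ideal lattice $I$ equipped with the weighted metric $u$ rather than the canonical uniform metric. Once a short nonzero element $g \in I$ in the $\|\cdot\|_u$-norm has been produced, the $C$-reducedness hypothesis gives $\|1\|_u \leq C\|g\|_u$, and since $\|1\|_u = \|u \cdot 1\| = \|u\|$ by definition of the $u$-length, the claimed bound on $\|u\|$ follows immediately.

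Concretely, the first step is to apply Minkowski's theorem to the symmetric convex body
$$S_u = \{(x_\sigma) \in F_\mathbb{R} : u_\sigma |x_\sigma| \leq r \text{ for all } \sigma\}$$
for a parameter $r > 0$. A direct computation, in the same volume convention used in the proof of Lemma \ref{prop:finite}, gives
$$vol(S_u) = \frac{2^{r_1}(2\pi)^{r_2}\,r^n}{\prod_{\sigma\text{ real}}u_\sigma \cdot \prod_{\sigma\text{ complex}}u_\sigma^2} = 2^{r_1}(2\pi)^{r_2}\,r^n,$$
since the denominator is exactly $N(u)=1$ by Remark \ref{re:u}. Comparing with $2^n covol(I)$, Minkowski produces a nonzero $g \in I \cap S_u$ as soon as $r^n \geq (2/\pi)^{r_2}covol(I)$, so I take $r = \left((2/\pi)^{r_2}covol(I)\right)^{1/n}$.

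The second step is to estimate $\|g\|_u$ for this $g$. Expanding
$$\|g\|_u^2 = \sum_{\sigma\text{ real}} u_\sigma^2|\sigma(g)|^2 + 2\sum_{\sigma\text{ complex}} u_\sigma^2|\sigma(g)|^2$$
and using the pointwise bound $u_\sigma|\sigma(g)| \leq r$, each of the $r_1 + 2r_2 = n$ weighted terms is at most $r^2$, so $\|g\|_u \leq \sqrt{n}\,r$. Combining with $\|1\|_u \leq C\|g\|_u$ and the identity $\|u\| = \|1\|_u$ yields $\|u\| \leq C\sqrt{n}\,(2/\pi)^{r_2/n}covol(I)^{1/n}$, which is the claim.

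The only delicate point is the bookkeeping at complex places: one must use consistently the volume convention on $F_\mathbb{R}$ in which $covol(N(I)^{-1/n}I) = \sqrt{|\Delta_F|}$ (as in Lemma \ref{prop:finite}), which carries an extra factor $2^{r_2}$ over naive Lebesgue measure, and the normalization $N(u)=\prod_\sigma u_\sigma^{n_\sigma}=1$ from Remark \ref{re:u} with $n_\sigma \in \{1,2\}$. Once these conventions are aligned, everything collapses cleanly, and the argument is essentially Minkowski's theorem applied to the $u$-weighted lattice $I$.
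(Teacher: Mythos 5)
Your proof is correct and follows essentially the same route as the paper: the paper applies Minkowski's theorem to the rescaled lattice $uI$ with a standard box, while you apply it to $I$ with the $u$-skewed box $S_u$, which is the same computation after the volume-preserving change of variables $x\mapsto ux$ (using $N(u)=1$). The volume bookkeeping at the complex places and the final estimate $\|g\|_u\leq\sqrt{n}\,r$ match the paper's argument exactly.
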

\begin{proof}
Let $L = u I:= \{u f = (u_{\sigma} \cdot \sigma(f))_{\sigma}: f \in I \} \subset F_{\mathbb{R}} $. Then $L$ is a lattice with metric inherited  from $F_{\mathbb{R}}$ (see \cite{ref:4}). Since $N(u) =1$, the lattice $L$ has covolume equal to $\co(I) $ . Consider the symmetric, convex and bounded subset $S$ of $F_\mathbb{R}$
$$S= \{(x_{\sigma}): |x_{\sigma}| < (2/\pi)^{r_2/n}\co(I)^{1/n} \text{ for all } \sigma \}.$$
We have 
$$\vo(S) = 2^{r_1}(2 \pi)^{ r_2} (2/\pi)^{r_2} \co(I) = 2^n \co(L).$$
 By Minkowski's theorem, there is a nonzero element $f \in I$ such that
$$ u_{\sigma} |\sigma(f)| \leq (2/\pi)^{r_2/n}\co(I)^{1/n} \text{ for all } \sigma.$$
So $$\|uf\| \leq \sqrt{n} (2/\pi)^{r_2/n}\co(I)^{1/n}.$$
Because $u$ satisfies the second condition of Definition \ref{def:1}, we have $\|u\| \leq C\|uf\|$. The proposition is then proved.
\end{proof}

\section{$C$-reduced Arakelov divisors of real quadratic fields}\label{sec:3}
In this part, fix $C \geq 1$ and a real quadratic field $F$ with the discriminant $\Delta_F$,  we will describe what $C$-reduced ideals look like, and we will investigate their properties.

Here and in the rest of the paper, we often  identify an element $g$ of fractional ideals with its image $(\sigma(g))_{\sigma} \in F_{\mathbb{R}}$. Thus, elements of fractional ideals of real quadratic fields have the form $g=(g_1, g_2) \in F_{\mathbb{R}} \cong \mathbb{R}^2$.

\begin{re}\label{rem_identityideal}
Let $F$ be an imaginary quadratic field and let $I$ be a fractional ideal of $F$. Then an element $g \in I$ can be identified with its image $g \in F_{\mathbb{R}} \cong \mathbb{C}$. The second condition of Definition \ref{def:1} is equivalent to: there exists $u \in \mathbb{R}_{>0}$  such that $  |u| \leq C|u g|$ for all $ g \in I\backslash \{0\}$. Since $u$ is a positive real number, this is equivalent to that $1/C \leq |g|$ for all $ g \in I\backslash \{0\}$.
In other words, the shortest vectors of $I$ have length at least $1/C$. 
In addition, the first vector in an LLL reduced basis of $I$ is also its shortest vector; finding this vector can also be done in polynomial time. This together with Lemma \ref{lem:pri} shows that whether a given ideal of an imaginary quadratic field is $C$-reduced can be tested easily and in polynomial time. Therefore, in this section, we only consider $C$-reduced ideals of real quadratic fields. 
\end{re}

 \subsection{A geometrical view of reduced ideals in real quadratic cases}\label{sec:geo}
We have $F_{\mathbb{R}} \cong \mathbb{R}^2$. Let $I$ be a fractional ideal of $F$ and $S_1$ be the square centered at the origin of $F_{\mathbb{R}}$ which has a vertex $(1/C,1/C)$. We have the following result.

\begin{prop}\label{pr:geo} The second condition in Definition \ref{def:1} can be restated as follows.
There exists an ellipse $E_4$, centered at the origin and passing through the vertices of $S_1$, whose interior does not contain any nonzero points of the lattice $I$. 
\end{prop}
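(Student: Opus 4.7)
The plan is to prove both implications by setting up an explicit correspondence between metrics $u$ satisfying the second condition of Definition \eqref{def:1} and ellipses $E_4$ of the type described. Since $F_\mathbb{R}\cong\mathbb{R}^2$ via the two real embeddings $\sigma_1,\sigma_2$, for $u=(u_1,u_2)\in\mathbb{R}_{>0}^2$ the expression $\|g\|_u^2 = u_1^2\sigma_1(g)^2 + u_2^2\sigma_2(g)^2$ is a positive-definite quadratic form on $\mathbb{R}^2$ whose level sets are axis-aligned ellipses centered at the origin; this will be the source of the ellipse $E_4$.

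For the forward direction, given $u$ satisfying $\|1\|_u\leq C\|g\|_u$ for all $0\neq g\in I$, I would take $E_4$ to be the level curve
\[
E_4 = \bigl\{(x,y)\in\mathbb{R}^2 : u_1^2 x^2 + u_2^2 y^2 = (u_1^2+u_2^2)/C^2\bigr\}.
\]
A direct substitution of the points $(\pm 1/C,\pm 1/C)$ shows $E_4$ passes through the four vertices of $S_1$, and the hypothesis on $u$ translates verbatim into the statement that the open interior of $E_4$ contains no nonzero element of $I$.

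For the converse I would start from the general equation $\alpha x^2 + 2\beta xy + \gamma y^2 = \delta$ of an origin-centred ellipse (with $\alpha,\gamma,\delta>0$ and $\alpha\gamma-\beta^2>0$). The crucial observation is that demanding $E_4$ pass through both $(1/C,1/C)$ and $(1/C,-1/C)$ forces $\beta=0$ by a one-line subtraction, so $E_4$ is automatically axis-aligned. From the remaining equation one can read off a metric $u=(u_1,u_2)$ (normalised so that $u_1^2+u_2^2=C^2$) whose $u$-ball of radius $\|1\|_u/C$ has $E_4$ as its boundary, after which the emptiness-of-interior hypothesis rewrites as the second condition of Definition \eqref{def:1}.

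The only real subtlety is this symmetry argument eliminating the cross-term $\beta xy$; once it is in place, everything reduces to a dictionary between the inequality $\|1\|_u\leq C\|g\|_u$ and the picture of a $u$-metric ball inscribed between the lattice points of $I$, and I expect the proof to be short.
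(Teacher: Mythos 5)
Your proposal is correct and follows essentially the same route as the paper, whose entire proof is the one-line remark that one should write the condition $\|1\|_u \leq C\|ug\|$ in coordinates; your explicit level-set $u_1^2x^2+u_2^2y^2=(u_1^2+u_2^2)/C^2$ is exactly that translation. The one point where you go beyond the paper is the observation that an origin-centred ellipse through both $(1/C,1/C)$ and $(1/C,-1/C)$ must have vanishing cross-term, which cleanly justifies the axis-aligned form $\frac{X_1^2}{a_1^2}+\frac{X_2^2}{a_2^2}=1$ that the paper silently assumes for $E_4$ in the sequel.
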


\begin{proof}
	It is easy to see by writing down the condition $\|u\| \leq C\|uf\|$ in terms of the coordinates of $u$ and $f$.
\end{proof}

\begin{prop}\label{pr:case} If $I$ has some nonzero element in the square $S_1$ then the ellipse $E_4$ described in Proposition \ref{pr:geo} does not exist. On the other hand, $E_4$ exists when the shortest vectors of $I$ have length at least $\sqrt{2}/C$. 
\end{prop}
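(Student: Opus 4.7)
My plan is to reduce both assertions to an elementary inequality by writing every admissible ellipse $E_4$ in closed form. Given a metric $u = (u_1, u_2)$ with $u_1, u_2 > 0$, the condition $\|1\|_u \leq C\|g\|_u$ for every nonzero $g \in I$ translates into the statement that every nonzero $g \in I$ lies on or outside the axis-aligned ellipse
\[
u_1^2 x^2 + u_2^2 y^2 = (u_1^2 + u_2^2)/C^2,
\]
which manifestly passes through the four vertices $(\pm 1/C, \pm 1/C)$ of $S_1$. After rescaling this is $\alpha x^2 + \beta y^2 = 1$ with $\alpha + \beta = C^2$ and $\alpha, \beta > 0$, and every such pair arises from some $u$. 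Thus the candidates $E_4$ in Proposition \ref{pr:geo} form exactly this one-parameter family.

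For the first assertion, I would observe that every $(x,y) \in S_1$ satisfies $|x|, |y| \leq 1/C$, hence
\[
\alpha x^2 + \beta y^2 \leq (\alpha + \beta)/C^2 = 1,
\]
with equality iff $(x,y)$ is one of the four vertices. So every non-vertex point of $S_1$ lies in the open interior of every admissible ellipse. If $I$ contains a nonzero element in $S_1$ other than a vertex, it is strictly inside every candidate $E_4$ and no admissible $E_4$ can exist.

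For the second assertion, I would simply exhibit the circular case $\alpha = \beta = C^2/2$, corresponding to $u_1 = u_2$. The resulting $E_4$ is the circle $x^2 + y^2 = 2/C^2$ of radius $\sqrt{2}/C$, which passes through the vertices of $S_1$ and whose open interior equals $\{v \in F_{\mathbb{R}} : \|v\| < \sqrt{2}/C\}$. The hypothesis that the shortest nonzero vector of $I$ has length at least $\sqrt{2}/C$ is precisely what rules out any nonzero element of $I$ from this open interior, so this circle is a valid $E_4$.

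I do not expect any real obstacle: the entire argument is geometric and reduces to a single one-line inequality in each direction. The only mildly delicate point is the borderline configuration in which a shortest vector of $I$ happens to sit at a vertex of $S_1$, where both hypotheses are simultaneously meaningful; but such a vertex lies \emph{on} the circular $E_4$, not in its open interior, so the second assertion still goes through and the two statements remain consistent.
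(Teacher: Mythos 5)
Your proof is correct and follows essentially the same route as the paper: the square $S_1$ is contained in every admissible ellipse (your inequality $\alpha x^2+\beta y^2\le(\alpha+\beta)/C^2=1$ just makes the paper's containment argument explicit), and for the converse the circle of radius $\sqrt{2}/C$ is exhibited as a valid $E_4$. Your extra care with the boundary (vertices lie on, not inside, the ellipses, matching the strict inequality $x_1^2+x_2^2<2/C^2$ in the paper's later definition of $S_1$) is a welcome refinement but not a different method.
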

\begin{proof}
\begin{figure}[h]
 \centering
        \begin{minipage}{0.49\textwidth}
          \includegraphics[width=\linewidth]{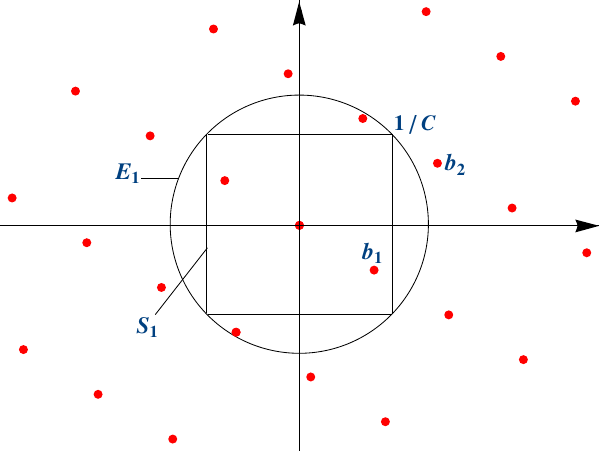}
          \caption{The shortest vectors of $I$ are inside the square $S_1$.}
          \label{pic:a}
        \end{minipage}
        \begin {minipage}{0.49\textwidth}
         \includegraphics[width=\linewidth]{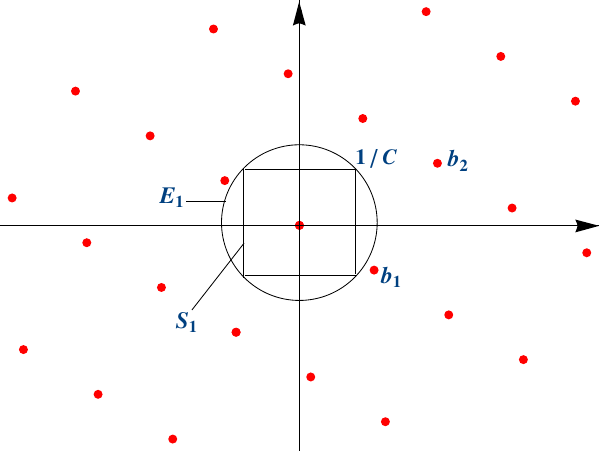}
         \caption{The shortest vectors of $I$ are outside the circle $E_1$.}
          \label{pic:b}
        \end{minipage}
        \end{figure}     
For the first case, we assume that there is a nonzero element $g$ of $I$ in the square $S_1$.  Since $S_1$ is inside $E_4$, so is $g$ (see Figure \ref{pic:a}). 
In the second case, we can take for $E_4$ the circle $E_1$ centered at the origin and of radius $\sqrt{2}/C$. Because the shortest vectors of $I$ are outside $E_1$, all the nonzero elements of $I$ are outside  $E_4$(see Figure \ref{pic:b}).
\end{proof}

\begin{re}\label{re:cases}
Proposition \ref{pr:case} does not show whether the ellipse $E_4$ exists or not in case the shortest vectors of $I$ are inside the circle $E_1$, and $I$ has no nonzero element in the square $S_1$ (see Figure \ref{pic:c}). We will discuss this case in the next sections. 
\end{re}

\begin{figure}[h]
 \centering
 \includegraphics[width=8cm]{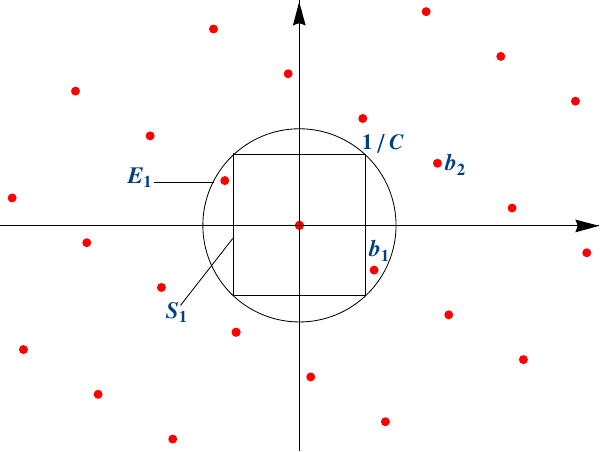}
\caption{The shortest vectors of $I$ are inside $E_1$ and $I$ has no nonzero element in $S_1$.}
 \label{pic:c}                               
\end{figure}                              
 
\subsection{Some properties of $C$-reduced ideals in real quadratic fields}\label{sec:properties}
In this section, as mentioned in Remark \ref{re:cases}, we always assume that $I$ satisfies the conditions $(\star)$ as follows.
 \[
 (\star ) 
 \begin{cases} 
 1) \hspace*{0.1cm}1 \text{ is primitive in } I. \\ 
 2) \hspace*{0.1cm}I \text{ has no nonzero element in the square } \\
  S_1 = \left\{ (x_1, x_2) \in \mathbb{R}^2: |x_1| \leq  1/C \text{ and } |x_2| \leq 1/C \text{ and } x_1^2 + x_2^2 < 2/C ^2 \right\}.\\
 3) \hspace*{0.3cm} \text{A shortest vector } f \text{ of } I \text{ has length } 1/C < \|f\| < \sqrt{2}/C.
 \end{cases}
 \]
 
 Moreover, by Remark \ref{re:u}, we can assume that the vector $u$ in Definition \ref{def:1} has the form $u = (\alpha^{-1}, \alpha) \in \left(\mathbb{R}_{>0}\right)^2 \subset  F_{\mathbb{R}} $ for some $\alpha \in \mathbb{R}_{>0}$.
 
 Let $\{b_1=(b_{1,1}, b_{1,2}), b_2=(b_{2,1}, b_{2,2})\}$ be an LLL-basis of $I$. Then $\|b_1\| =\|f\| < \frac{\sqrt{2}}{C}$.  We denote by $\{b_1^*, b_2^*\}$
 the Gram-Schmidt orthogonalization of the basis $\{b_1, b_2\}$.

Let 
$$G = \{g \in I: \left(g_1^2-\frac{1}{C^2}\right) \left(g_2^2-\frac{1}{C^2}\right) < 0 \text{ and } \|g\| < \frac{4}{ \pi} C \co(I)  \}.$$
We also set
$$G_1 = \{g \in G: g_1^2-\frac{1}{C^2}<0   \} \text{ and }  G_2 = \{g \in G: g_2^2-\frac{1}{C^2}<0   \}.$$
 So, we obtain $ G = G_1 \cup G_2$.\\
 For each $g \in G$, we define 
 $$ B(g):=\left( -\frac{C^2 g_1^2-1}{C^2 g_2^2-1}\right)^{1/4}.$$
 Then denote
 \begin{equation}
 B_{min} =
  \begin{cases} 
    \frac{1}{2 \sqrt{C}} \hspace*{3cm} \text{  if  } & G_1 = \emptyset \\ 
    \max \left\{ B(g): g \in G_1 \right\}  \text{   if } & G_1 \neq \emptyset.
    \end{cases}
 \end{equation} 
 \begin{equation}
  B_{max} =
  \begin{cases} 
    2 \sqrt{C} \hspace*{2.8cm} \text{  if  } & G_2 = \emptyset \\ 
    \min \left\{ B(g): g \in G_2 \right\}  \text{   if } & G_2 \neq \emptyset.
    \end{cases}
 \end{equation}

  Let $G' = \{g \in G: B(g) = B_{max} \text{ or } B(g) = B_{min} \}$. Then because of assumption $(\star)$, the vector $b_1$ is in $G$. Thus,  $G'$ is nonempty. 

The most important result in this paper is the following proposition.

\begin{prop}\label{lem:u}
The ideal $I$ is $C$-reduced if and only if  $ B_{min} \leq B_{max}$.
\end{prop}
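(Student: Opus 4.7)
The plan is to translate the second condition of Definition \eqref{def:1} into a range of admissible values of $\alpha$ (with $u=(\alpha^{-1},\alpha)$ as allowed by Remark \eqref{re:u}), and to show that this range is exactly $[B_{min}, B_{max}]$ (once the conservative defaults are accounted for); the proposition then follows immediately. For $g = (g_1, g_2) \in I$, expanding $\|1\|_u^2 \leq C^2 \|g\|_u^2$ and multiplying by $\alpha^2$ rewrites the inequality as
$$\alpha^4 (C^2 g_2^2 - 1) \geq 1 - C^2 g_1^2.$$

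I would classify $g \in I\setminus\{0\}$ according to the signs of $C^2 g_1^2 - 1$ and $C^2 g_2^2 - 1$: if both are non-negative, the inequality is automatic for every $\alpha>0$; if both are strictly negative, condition 2) of $(\star)$ puts $g$ in the square $S_1$, which is excluded. The only genuine case is the mixed one, $g \in G_1 \cup G_2$, where a direct rearrangement using the definition of $B(g)$ reduces the inequality to $\alpha \geq B(g)$ when $g \in G_1$ and to $\alpha \leq B(g)$ when $g \in G_2$. Next I would use Proposition \eqref{pro:ub} to bound $\alpha$ from both sides in terms of $covol(I)$, and then argue that any mixed element with $\|g\| \geq \frac{4}{\pi} C\, covol(I)$ imposes only a constraint automatically satisfied within the admissible range of $\alpha$: such a $g$ has one coordinate smaller than $1/C$ in absolute value while the other is very large, so $B(g)$ is either far below the allowed lower bound on $\alpha$ (for $g \in G_1$) or far above the allowed upper bound (for $g \in G_2$). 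This collapses the infinite family of constraints to the finite family indexed by $G$.

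Intersecting these finitely many constraints gives precisely the range $[\max_{g \in G_1} B(g),\, \min_{g \in G_2} B(g)]$, where the default values $\frac{1}{2\sqrt{C}}$ and $2\sqrt{C}$ are inserted when $G_1$ or $G_2$ is empty; these defaults are conservative in the sense that $\frac{1}{2\sqrt{C}} \leq 2\sqrt{C}$ for every $C \geq 1$, so they never manufacture a spurious failure, and they are loose enough to sit inside the constraint set coming from any tail element. The range is nonempty iff $B_{min} \leq B_{max}$, which is the proposition. The main obstacle is the quantitative tail step: verifying that the cutoff $\frac{4}{\pi} C\, covol(I)$ in the definition of $G$ is sharp enough for the argument to work. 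This requires combining the Minkowski-type bound of Proposition \eqref{pro:ub} with the constraint on the small coordinate of a mixed element to check that the tail constraints really are vacuous on the admissible range of $\alpha$, and this is where the specific constant $4/\pi$ enters.
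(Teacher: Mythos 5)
Your overall strategy---rewriting $\|1\|_u\le C\|g\|_u$ as $\alpha^4(C^2g_2^2-1)\ge 1-C^2g_1^2$, splitting into sign cases so that each $g\in G_1$ forces $\alpha\ge B(g)$ and each $g\in G_2$ forces $\alpha\le B(g)$, and concluding that a valid $\alpha$ exists iff the interval $[B_{min},B_{max}]$ is nonempty---is exactly the paper's. The paper's proof is short because it delegates the truncation from $I\setminus\{0\}$ to the finite set $G$ to Proposition \eqref{lem:limit2}, and it uses Proposition \eqref{lem:al} only to justify the default values $\frac{1}{2\sqrt{C}}$ and $2\sqrt{C}$ when $G_1$ or $G_2$ is empty.

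The gap is in your treatment of the tail. You claim that a mixed element $g$ with $\|g\|\ge \frac{4}{\pi}C\,covol(I)$ has $B(g)$ falling outside the admissible window $\left(\frac{1}{2\sqrt{C}},2\sqrt{C}\right)$ for $\alpha$, so that its constraint is vacuous. This is quantitatively false: since $covol(I)<\frac{2}{C}$ (Proposition \eqref{pr:volI}), the cutoff $\frac{4}{\pi}C\,covol(I)$ is below $\frac{8}{\pi}\approx 2.55$ and can be close to $1$. For instance, with $C=1$ and $covol(I)=0.8$ the cutoff is about $1.02$, and a mixed point such as $g=(0.5,\,1.05)$ has $\|g\|\approx 1.16$, above the cutoff, yet $B(g)=\left(\frac{1-0.25}{1.1025-1}\right)^{1/4}\approx 1.64$, squarely inside $\left(\frac{1}{2},2\right)$. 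So tail constraints cannot be discarded by comparing $B(g)$ with the a priori range of $\alpha$. The paper's mechanism is different and genuinely geometric: for a metric $u$ satisfying the full condition, the associated ellipse $E_4$ passes through the vertices of $S_1$ (so both semi-axes exceed $\frac{1}{C}$) and has no nonzero lattice point in its interior, whence Minkowski's theorem bounds its area by $4\,covol(I)$ and therefore each semi-axis by $\frac{4}{\pi}C\,covol(I)$ (Proposition \eqref{lem:B}); every $g$ beyond that radius is then automatically outside $E_4$. Note also that Proposition \eqref{lem:al} (your appeal to Proposition \eqref{pro:ub}) bounds $\alpha$ only for a $u$ already known to satisfy the full condition, so it cannot by itself restrict $\alpha$ in the direction ``finitely many constraints satisfiable $\Rightarrow$ $I$ is $C$-reduced.'' To repair the argument, replace the $B(g)$-versus-window comparison by an appeal to Propositions \eqref{pr:geo}, \eqref{lem:B} and \eqref{lem:limit2}.
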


We prove this proposition after proving some results below. First, we establish a property of the ellipses $E_4$ described in Section \ref{sec:geo}.

\begin{prop}\label{lem:B}
Assume that $E_4: \frac{X_{1}^2}{a_{1}^2} + \frac{X_{2}^2}{a_{2}^2} =1 $ with $a_1 >0 $ and $a_2>0$ is an ellipse satisfying the conclusion of Proposition \ref{pr:geo}. In other words, $E_4$ has its center at the origin, passes through the vertices of $S_1$ and the interior contains no nonzero points of the lattice $I$. Then:
\begin{itemize}
\item[i)] The coefficients $a_{1}$ and $a_2$ are bounded by $\frac{4}{ \pi} C \co(I)$.  
\item[ii)] $E_4$ is inside the circle $E_5$ of radius $\frac{4}{ \pi} C \co(I)$ centered at the origin.
\end{itemize}
\end{prop}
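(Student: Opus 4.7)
The plan is to exploit two pieces of information about $E_4$: the algebraic constraint from passing through the vertices of $S_1$, and the geometric constraint that Minkowski's theorem places on the area of any centrally symmetric convex body that avoids nontrivial lattice points of $I$ in its interior.

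First I would write down what it means for $E_4$ to pass through the vertex $(1/C,1/C)$ of $S_1$: this plugs into the equation of $E_4$ to give
\[
\frac{1}{C^2 a_1^2}+\frac{1}{C^2 a_2^2}=1,\qquad\text{i.e.}\qquad \frac{1}{a_1^2}+\frac{1}{a_2^2}=C^2.
\]
Since each summand is positive, each is at most $C^2$, so $a_1\geq 1/C$ and $a_2\geq 1/C$. This lower bound is what will let us convert a bound on the product $a_1 a_2$ into individual bounds on $a_1$ and $a_2$.

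Next I would apply Minkowski's theorem to the interior of $E_4$, which is an open, centrally symmetric, bounded, convex subset of $F_{\mathbb{R}}\cong\mathbb{R}^2$ of area $\pi a_1 a_2$. By hypothesis this open region contains no nonzero element of the lattice $I$, so Minkowski forces
\[
\pi a_1 a_2 \leq 4\,\mathrm{covol}(I),\qquad\text{hence}\qquad a_1 a_2 \leq \tfrac{4}{\pi}\,\mathrm{covol}(I).
\]
Combining with the lower bounds from the previous step yields
\[
a_1=\frac{a_1 a_2}{a_2}\leq \frac{(4/\pi)\,\mathrm{covol}(I)}{1/C}=\tfrac{4}{\pi}C\,\mathrm{covol}(I),
\]
and symmetrically for $a_2$. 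This proves part (i).

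For part (ii), I would note that on the ellipse $\frac{X_1^2}{a_1^2}+\frac{X_2^2}{a_2^2}=1$, an elementary computation (substitute $X_2^2=a_2^2(1-X_1^2/a_1^2)$ and analyze the sign of $1-a_2^2/a_1^2$) shows that $X_1^2+X_2^2$ attains its maximum $\max(a_1,a_2)^2$ at the endpoints of the major axis. By (i) this maximum is at most $\bigl(\tfrac{4}{\pi}C\,\mathrm{covol}(I)\bigr)^2$, so every point of $E_4$ lies inside (or on) the circle $E_5$. The only step requiring care is the application of Minkowski's theorem to the open ellipse rather than its closure, but since the volume bound is strict in the classical statement, the open version follows immediately; this is the only mild subtlety in an otherwise routine argument.
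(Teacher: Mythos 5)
Your proof is correct and follows essentially the same route as the paper's: the vertex condition forces $a_1,a_2\geq 1/C$, Minkowski's theorem bounds the area $\pi a_1 a_2$ by $4\,\mathrm{covol}(I)$, and dividing gives the bound on each semi-axis, with part (ii) following from part (i). Your explicit treatment of the strict-versus-nonstrict issue in Minkowski for the open ellipse is a minor refinement the paper glosses over, but the argument is the same.
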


\begin{figure}[htbp]
	\centering
	\includegraphics[width=8cm]{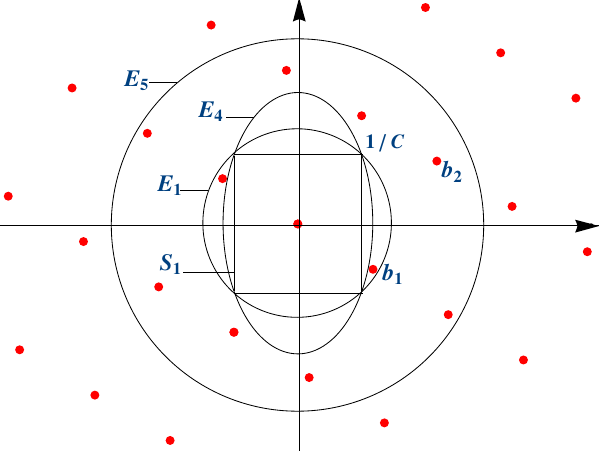}
	\caption[Picture 4]{Circle  $E_5$ and ellipse $E_4$.}
		\label{fig:circle}
\end{figure}

\begin{proof}  
 Since $E_4$ passes through the vertex $ (1/C, 1/C )$ of $S_1$, its coefficients satisfy $a_1>1/C$ and $a_2>1/C$ . We also know that $\vo(E_4) = \pi a_1 a_2$. Hence
 $$a_1 =  \frac{\vo(E_4)}{\pi a_2 } < \frac{1}{ \pi} C \vo(E_4).$$
 In addition, the ellipse $E_4$ is a symmetric, convex and bounded set whose interior contains no nonzero points of the lattice $I$, hence it must have volume less than $2^2 \co(I)$ by Minkowski's theorem. As a consequence,
 $$a_1  < \frac{4}{ \pi} C \co(I) .$$
 By symmetry, we also have this bound for $a_2$. Thus, the first statement of the proposition is obtained. The second one follows from the first.
\end{proof}

We have another equivalent condition to Definition \ref{def:1} as follows.
 
 \begin{prop}\label{lem:limit2}  The second condition of Definition \ref{def:1} is equivalent to the following: there exists a metric $u \in \left(\mathbb{R}_{>0}\right)^2$  such that 
    for all $ g \in G $, we have  $  \|1\|_u \leq C\|g\|_u $.
    \end{prop}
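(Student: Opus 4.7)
The $(\Rightarrow)$ direction is immediate since $G \subseteq I \setminus \{0\}$: any metric $u$ that witnesses the second condition of Definition \eqref{def:1} automatically witnesses the restricted condition on $G$.

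For the $(\Leftarrow)$ direction I would fix $u \in (\mathbb{R}_{>0})^2$ with $\|1\|_u \le C\|g\|_u$ for every $g \in G$, and, by Remark \eqref{re:u}, normalize $N(u) = 1$ so that $u = (\alpha^{-1}, \alpha)$. The goal is to upgrade the inequality to all nonzero $g \in I$. By Proposition \eqref{pr:geo} this is equivalent to showing that the ellipse $E_u: u_1^2 X_1^2 + u_2^2 X_2^2 = (u_1^2+u_2^2)/C^2$, which automatically passes through the four vertices $(\pm 1/C, \pm 1/C)$ of $S_1$, contains no nonzero point of $I$ in its open interior. I would partition $I\setminus\{0\}$ according to the location of $(g_1,g_2)$ relative to $S_1$: the second part of $(\star)$ rules out $g \in S_1$; if $|g_1| \ge 1/C$ and $|g_2| \ge 1/C$ then directly $\|g\|_u^2 \ge (u_1^2+u_2^2)/C^2 = \|1\|_u^2/C^2$; if $|g_1| \le 1/C$ and $|g_2| \le 1/C$ with $g \notin S_1$, then $g_1^2 + g_2^2 = 2/C^2$ forces $g$ to be a vertex of $S_1$ and the inequality holds with equality; and if $g$ lies in the strip region and $g \in G$, the hypothesis applies verbatim.

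The remaining case, which I expect to be the main obstacle, is when $g$ lies in the strip region with $\|g\| \ge (4/\pi) C\, covol(I)$, so that $g \notin G$. WLOG $|g_1| < 1/C < |g_2|$; rewriting in coordinates reduces $\|1\|_u \le C\|g\|_u$ to $\alpha \ge B(g)$. From $(\star)$ together with the Hermite bound $covol(I) \ge (\sqrt{3}/2)\|b_1\|^2 > (\pi/(4C))\|b_1\|$, which follows because $\|b_1\| > 1/C > \pi/(2\sqrt{3}C)$, one verifies $\|b_1\| < (4/\pi) C\, covol(I)$, so the LLL shortest vector $b_1$ lies in $G$; after possibly swapping coordinates, $b_1 \in G_1$, and the $G$-hypothesis applied at $b_1$ then gives $\alpha \ge B(b_1)$. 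The plan is therefore to close this case by establishing $B(g) \le B(b_1)$, which is equivalent to
\[
(1 - C^2 g_1^2)\bigl(C^2 (b_1)_2^2 - 1\bigr) \;\le\; \bigl(C^2 g_2^2 - 1\bigr)\bigl(1 - C^2 (b_1)_1^2\bigr).
\]
The ingredients I would combine are: the lower bound $g_2^2 \ge \|g\|^2/2 \ge (8/\pi^2) C^2\, covol(I)^2$, which inflates $C^2 g_2^2 - 1$; the bound $(b_1)_2^2 < 2/C^2$ from $(\star)$, which caps $C^2 (b_1)_2^2 - 1$ below $1$; and the real-quadratic identity $|g_1 g_2| = |N(g)| \ge N(I)$, together with $covol(I) = N(I)\sqrt{|\Delta_F|}$ from the proof of Lemma \eqref{prop:finite}, to relate $|g_1|$ to $|g_2|$ and control the term $1 - C^2 (b_1)_1^2$.

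Should the direct comparison with $b_1$ be insufficient in some marginal configuration, the fall-back plan is to reduce $g$ modulo $b_1$ (and $b_2$ if necessary) to produce a vector $g' \in G_1$ with $B(g') \ge B(g)$; applying the $G$-hypothesis at $g'$ then yields $\alpha \ge B(g') \ge B(g)$ as required. The symmetric strip case $|g_1| > 1/C, |g_2| < 1/C$ is handled by interchanging the two coordinates, using $b_1 \in G_2$ (or the default bound $B_{max} = 2\sqrt{C}$ if $G_2 = \emptyset$).
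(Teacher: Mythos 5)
Your $(\Rightarrow)$ direction and the first three cases of your $(\Leftarrow)$ direction coincide with the paper's proof: points of $S_1$ are excluded by $(\star)$, points with $|g_1|\ge 1/C$ and $|g_2|\ge 1/C$ satisfy $\|g\|_u^2\ge (u_1^2+u_2^2)/C^2=\|1\|_u^2/C^2$ for every metric $u$, and strip-region points of norm less than $\frac{4}{\pi}C\,covol(I)$ are covered by the hypothesis. The divergence is in the remaining case, and there your plan has a genuine gap. The paper never compares $B$-values at all: it disposes of strip-region points $g$ with $\|g\|\ge\frac{4}{\pi}C\,covol(I)$ by Proposition \eqref{lem:B}, i.e.\ by Minkowski's theorem any ellipse of the admissible family (centered at the origin, passing through the vertices of $S_1$) whose interior avoids the lattice has area at most $4\,covol(I)$ while both semi-axes exceed $1/C$, hence both semi-axes are below $\frac{4}{\pi}C\,covol(I)$; such a $g$ therefore lies outside every relevant ellipse and contributes no constraint. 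This volume argument is precisely where the radius $\frac{4}{\pi}C\,covol(I)$ in the definition of $G$ comes from.

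Your substitute, the pointwise inequality $B(g)\le B(b_1)$, is not established by the bounds you cite and need not hold. The problem is the denominator $C^2g_2^2-1$: your lower bound $C^2g_2^2\ge \frac{8}{\pi^2}C^4covol(I)^2$ does not separate it from $0$, because $covol(I)$ can be as small as roughly $\frac{\sqrt 3}{2C^2}$ (Hermite, with $\|b_1\|$ only slightly above $1/C$), in which case $\frac{8}{\pi^2}C^4covol(I)^2\approx 0.61<1$ and the claimed ``inflation'' of $C^2g_2^2-1$ fails. Whenever $\frac{4}{\pi}C\,covol(I)<\frac{\sqrt 2}{C}$, a strip-region point with $\|g\|\ge\frac{4}{\pi}C\,covol(I)$ may have $|g_2|$ arbitrarily close to $1/C$, making $B(g)$ arbitrarily large, in particular larger than $B(b_1)$ and even than $2\sqrt C$; the norm identity $|g_1g_2|\ge N(I)$ does not prevent this. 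The fall-back of reducing $g$ modulo $b_1$ is not an argument either, since $B$ is not monotone under such reductions. To close this case you should abandon the comparison of $B$-values and use the Minkowski volume bound of Proposition \eqref{lem:B} (together with Proposition \eqref{pr:volI} if you want explicit numbers), as the paper does.
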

    
\begin{proof}
Let $g = (g_1, g_2)$ be a nonzero element of $I$. If $\|g\| \geq (4/ \pi) C \co(I) $ then $g$ is outside the circle $E_5$. By Proposition \ref{lem:B}, $g$ is also outside any ellipse $E_4$ (see Figure \ref{fig:circle}). Using this and the equivalent condition of Proposition \ref{pr:geo}, we obtain: a vector $u$ satisfies Definition \ref{def:1} if and only if $\|u\| \leq C \|ug\|$
for all $ g \in I \backslash \{0\}$ with $\|g\| < (4/ \pi) C \co(I)$. 

On the other hand, if  $|g_1| \geq 1/C$ and  $|g_2| \geq 1/C$, then $g$ satisfies $\|u\| \leq C \|ug\|$ for any $u \in \left(\mathbb{R}_{>0}\right)^2$. 
 Therefore, it is sufficient to consider the elements $g$ such that $|g_1|<1/C $ or $|g_2| < 1/C$
 to show the existence of $u$.
 
 Moreover, $I$ contains no nonzero elements of $S_1$, so $ g \notin  \{ (x_1, x_2) \in \mathbb{R}^2: |x_1| \leq  1/C \text{ and } |x_2| \leq 1/C \text{ and } x_1^2 + x_2^2 < 2/C ^2\} $. 
 
 Combining these conditions, we obtain the conclusion.
\end{proof}

The ideal $I$ with properties $(\star)$ mentioned at the beginning of this section has bounded covolume.  Explicitly, we obtain the following.
\begin{prop}\label{pr:volI}
The covolume of $I$ is bounded by $2/C$. 
\end{prop}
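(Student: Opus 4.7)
The plan is to bound the two successive minima $\lambda_1 \leq \lambda_2$ of the lattice $I \subset F_\mathbb{R} \cong \mathbb{R}^2$ and then apply the two-dimensional inequality $covol(I) \leq \lambda_1 \lambda_2$. That inequality follows immediately from the existence of a Minkowski-reduced basis $\{b_1, b_2\}$ with $\|b_i\| = \lambda_i$ and $covol(I) = \|b_1\|\,\|b_2^*\| \leq \|b_1\|\,\|b_2\|$. Condition~$(\star)$ already yields $\lambda_1 = \|f\| < \sqrt{2}/C$, so the task reduces to exhibiting a lattice vector of length at most $\sqrt{2}$ that is linearly independent of $f$.

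The natural candidate is the element $1 \in I$, whose image in $F_\mathbb{R}$ is the vector $(1,1)$ of length $\sqrt{2}$. If $(1,1)$ is linearly independent of $f$ in $\mathbb{R}^2$, then $\lambda_2 \leq \sqrt{2}$, and one immediately gets $covol(I) \leq \lambda_1 \lambda_2 < (\sqrt{2}/C)\cdot\sqrt{2} = 2/C$. So the only point that needs real work is this linear independence.

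For it, I would first observe that the primitivity of $1$ in $I$ forces $I \cap \mathbb{Q} = \mathbb{Z}$: given a rational $\alpha = p/q \in I$ in lowest terms with $q \geq 2$, choose Bézout integers $a, b$ with $ap + bq = 1$; then $a\alpha + b = 1/q$ lies in $I$, contradicting primitivity. If now $f$ were a real multiple of $(1,1)$, the corresponding algebraic number $\alpha \in I$ would satisfy $\sigma_1(\alpha) = \sigma_2(\alpha)$ and would therefore be fixed by the nontrivial Galois automorphism of $F/\mathbb{Q}$; hence $\alpha \in \mathbb{Q} \cap I = \mathbb{Z}$, and $\|f\| = |\alpha|\sqrt{2} \geq \sqrt{2}$ would contradict $\|f\| < \sqrt{2}/C \leq \sqrt{2}$.

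The only potentially delicate step is the passage from primitivity of $1$ to integrality of the rational elements of $I$; everything else is a routine application of the two-dimensional geometry of numbers. Once the independence is in hand, the bound $covol(I) \leq 2/C$ (in fact with strict inequality) drops out in one line.
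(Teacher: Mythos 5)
Your argument is correct and shares its skeleton with the paper's proof: both bound $covol(I)$ by the product $\|f\|\cdot\|1\| < \frac{\sqrt{2}}{C}\cdot\sqrt{2} = \frac{2}{C}$, and both reduce everything to showing that $1$ and the shortest vector of $I$ are linearly independent. The difference lies in how that independence is obtained. The paper writes $1 = m_1 b_1 + m_2 b_2$ in the LLL basis and disposes of the case $m_2 = 0$ in one line ($b_1 = 1/m_1 \in I$, so primitivity forces $m_1 = \pm 1$ and $\|b_1\| = \sqrt{2}$, contradicting $\|b_1\| < \sqrt{2}/C$); it then gets $\|b_2^*\| \leq \|1\|/|m_2| \leq \sqrt{2}$ and concludes via $covol(I) = \|b_1\|\,\|b_2^*\|$. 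You instead invoke the arithmetic of $F$: a lattice vector on the line $\mathbb{R}\cdot(1,1)$ corresponds to a Galois-fixed, hence rational, element, and your B\'ezout argument correctly upgrades the primitivity of $1$ to $I \cap \mathbb{Q} = \mathbb{Z}$, so such an element has length at least $\sqrt{2}$, too long to be $f$. Both routes are sound, and your appeal to $covol(I) \leq \lambda_1 \lambda_2$ via a Minkowski-reduced basis is valid in dimension $2$. The paper's coefficient argument is slightly more economical and purely lattice-theoretic --- essentially the same computation with the integer coefficients $m_i$ is reused in the cubic construction of Section 5 --- whereas your Galois step exploits that quadratic fields are normal over $\mathbb{Q}$ and would not transfer to fields of higher degree.
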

\begin{proof}
Since $1$ is in $I$, there exist integers $m_1$ and $m_2$ such that $1 = m_1 b_1 + m_2 b_2$. If $m_2 = 0$ then  $1= m_1 b_1$ so $1/m_1 = b_1 \in I$. Because $1$ is primitive in $I$, we must have $m_1 = \pm 1$.  Thus $ \|b_1\| = \|1\| = \sqrt{2} \geq \sqrt{2}/C$ for any $C \geq 1$. This contradicts the fact that the length of the shortest vectors of $I$ is strictly less than $\sqrt{2}/C$. Hence $m_2 \neq 0$.

We have $\|b_2^*\| \leq \frac{1}{|m_2|} \|1\| \leq \sqrt{2}$. 
 This leads to the following.
 $$ \co(I) = \|b_1\| \|b_2^*\|  < \frac{\sqrt{2}}{C} \times \sqrt{2} = \frac{2}{C}.$$  
\end{proof}

By this proposition and Proposition \ref{lem:B}, we obtain the corollary below.
\begin{cor}\label{corg}
The coefficients $a_{1}$ and $a_2$ and the radius of the circle $E_5$ in Proposition \ref{lem:B} are bounded by $8/\pi$. In addition, the set $G$ is contained in the finite set $\{g \in I: (g_1^2-1/C^2) (g_2^2-1/C^2) < 0 \text{ and } \|g\| < 8/\pi\}$.
\end{cor}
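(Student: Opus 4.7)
The corollary is a straightforward consequence of combining the two preceding propositions, so the plan is essentially a substitution. Here is how I would organise it.

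First, I would recall from Proposition \ref{lem:B} that the two semi-axes $a_1, a_2$ of any admissible ellipse $E_4$ and the radius of the enclosing circle $E_5$ are all bounded above by $\frac{4}{\pi} C \cdot \mathrm{covol}(I)$. Next, I would invoke Proposition \ref{pr:volI}, which under the standing assumption $(\star)$ gives $\mathrm{covol}(I) < \frac{2}{C}$. Multiplying these two bounds yields
\[
a_1,\, a_2,\, \mathrm{radius}(E_5) \;<\; \frac{4}{\pi} C \cdot \frac{2}{C} \;=\; \frac{8}{\pi},
\]
which is exactly the first assertion. The factor of $C$ cancels, which is the only point worth noticing; no further estimates are needed.

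For the second assertion, I would write down the definition of $G$, namely
\[
G = \left\{ g \in I : (g_1^2 - \tfrac{1}{C^2})(g_2^2 - \tfrac{1}{C^2}) < 0 \text{ and } \|g\| < \tfrac{4}{\pi} C \, \mathrm{covol}(I) \right\},
\]
and replace the norm bound by the same uniform constant $\frac{8}{\pi}$ via Proposition \ref{pr:volI}. This immediately gives the inclusion into the set advertised in the statement. Finiteness is then automatic because any lattice has only finitely many points of bounded norm.

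There is no real obstacle here: the work has already been done in Propositions \ref{lem:B} and \ref{pr:volI}, and the corollary's sole purpose is to record the fact that the implicit dependence on $C$ and $\mathrm{covol}(I)$ disappears once the hypotheses $(\star)$ are imposed, leaving an absolute constant $8/\pi$. The only mildly delicate point, which I would mention explicitly, is that $G$ is thereby contained in a search region that is independent of both $I$ and $C$, which is precisely what will make the algorithmic application in Section 4 polynomial-time.
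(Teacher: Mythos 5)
Your argument is correct and is exactly the paper's: the corollary is obtained by substituting the bound $\mathrm{covol}(I) < \frac{2}{C}$ from Proposition \eqref{pr:volI} into the bound $\frac{4}{\pi}C\,\mathrm{covol}(I)$ of Proposition \eqref{lem:B}, the factor $C$ cancelling to leave the absolute constant $\frac{8}{\pi}$. (Only your closing remark slightly overstates matters: the norm bound is independent of $I$ and $C$, but the set still depends on $C$ through the sign condition, and the resulting cardinality bound in Lemma \eqref{lem:carG} is $17C+3$, not an absolute constant.)
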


For a real quadratic field, the Proposition \ref{pro:ub} can be restated as below.

 \begin{prop}\label{lem:al}
 Assume that $u = (\alpha^{-1}, \alpha) \in \left(\mathbb{R}_{>0}\right)^2$ satisfies the second condition of Definition \ref{def:1}. Then $\|u\| \leq 2\sqrt{ C}$ and therefore
 $$\frac{1}{2 \sqrt{C}} <\alpha <  2\sqrt{ C}.$$
 \end{prop}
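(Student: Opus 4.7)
The plan is to obtain this as a direct specialization of Proposition \ref{pro:ub} to the real quadratic setting, combined with the covolume bound of Proposition \ref{pr:volI}. The vector $u=(\alpha^{-1},\alpha)$ automatically has $N(u)=\alpha^{-1}\cdot\alpha=1$, so the normalization convention of Remark \ref{re:u} is met and Proposition \ref{pro:ub} applies verbatim. Substituting $n=2$ and $r_2=0$ into that bound gives
$$\|u\|\ \leq\ C\sqrt{2}\cdot\operatorname{covol}(I)^{1/2}.$$

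Next, I would feed in the covolume estimate: Proposition \ref{pr:volI} (which uses precisely the standing assumption $(\star)$ of this subsection, and which is therefore available here) gives $\operatorname{covol}(I)<2/C$. Plugging this in yields
$$\|u\|\ \leq\ C\sqrt{2}\cdot\sqrt{2/C}\ =\ 2\sqrt{C},$$
which is the first claim.

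For the bounds on $\alpha$, I would expand $\|u\|^2=\alpha^{-2}+\alpha^2\leq 4C$. Because $\alpha^{-2}>0$ strictly, we have $\alpha^2<\alpha^2+\alpha^{-2}\leq 4C$, hence $\alpha<2\sqrt{C}$; the symmetric argument applied to $\alpha^{-1}$ (whose square is also strictly less than $\alpha^2+\alpha^{-2}$) gives $\alpha^{-1}<2\sqrt{C}$, i.e.\ $\alpha>\tfrac{1}{2\sqrt{C}}$.

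There is essentially no obstacle here: the proposition is a bookkeeping specialization of two earlier results. The only point to verify carefully is that the hypotheses line up—namely that we are working under $(\star)$ so Proposition \ref{pr:volI} applies, and that the form $u=(\alpha^{-1},\alpha)$ is consistent with the $N(u)=1$ normalization used in Proposition \ref{pro:ub}—both of which are explicitly imposed in this section.
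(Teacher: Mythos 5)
Your proposal is correct and follows exactly the paper's own argument: specialize Proposition \eqref{pro:ub} to $n=2$, $r_2=0$, insert the bound $\mathrm{covol}(I)<2/C$ from Proposition \eqref{pr:volI} to get $\|u\|\leq 2\sqrt{C}$, and then bound each coordinate of $u$ by $\|u\|$. No issues.
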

 \begin{proof}
 By Proposition \ref{pro:ub}, $\|u\| \leq   C \sqrt{2} \co(I)^{1/2}.$
  By Proposition \ref{pr:volI}, we have $\co(I) < \frac{2}{C}$, so $\|u\| \leq 2\sqrt{ C}$. Since $\alpha^{-1} < \|u\|$ and $ \alpha < \|u\|$, the conclusion follows.
 \end{proof}

  \begin{proof}[Proof of Proposition \ref{lem:u}]     
  Let $u = (\alpha^{-1}, \alpha) \in \left(\mathbb{R}_{>0}\right)^2$. Then from $  \|1\|_u \leq C\|g\|_u $, we get  $$\alpha^4(C^2 g_2^2-1) \geq -(C^2 g_1^2-1).$$
   Thus $\alpha \geq B(g)$ if $g \in G_1$ and $\alpha \leq B(g)$ if $g \in G_2$. 
   As $1$ is primitive in $I$, by Proposition \ref{lem:limit2}, the ideal $I$ is $C$-reduced if and only if it satisfies the following equivalent conditions: 
   
   there exists $u \in \left(\mathbb{R}_{>0}\right)^2$  such that 
   $  \|1\|_u \leq C\|g\|_u$ for all $ g \in G $, 
 \[
  \Leftrightarrow \text{ There exist }  \alpha \in \mathbb{R}_{>0} \text{  such that  }
   \begin{cases} 
   \alpha \geq B(g) &  \text{ for all } g \in G_1 \\ 
   \alpha \leq B(g)  &\text{    for all }  g \in G_2
  \end{cases}
 \]
  \[
  \Leftrightarrow \text{ There exists  } \alpha \in \mathbb{R}_{>0} \text{  such that }
  \begin{cases} 
    \alpha \geq B_{min} \\ 
    \alpha \leq B_{max}
    \end{cases}\hspace*{2.6cm}
\]
\[ \Leftrightarrow  B_{max} \geq B_{min}.\hspace*{8.5cm}
     \]
   The second equivalence comes from Proposition \ref{lem:al} and the definition of $B_{min} $ and $B_{max}$.
 \end{proof}
 
 Proposition \ref{lem:u} and \ref{lem:limit2} motivate a further investigation of properties of the sets $G$ and $G'$. We first establish a special property of the elements in $G$.

\begin{prop}\label{lem:s2} 
If $g = s_1b_1 + s_2 b_2 \in G$  then $|s_2| \leq 1 $. 
\end{prop}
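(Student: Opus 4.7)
The plan is to express $g$ in the orthogonal Gram–Schmidt coordinates and exploit the smallness of $\|g\|$ combined with integrality of $s_2$. Writing $b_2 = \mu b_1 + b_2^*$ for the Gram–Schmidt coefficient $\mu$, we get
\[
g = s_1 b_1 + s_2 b_2 = (s_1 + s_2 \mu)\,b_1 + s_2\,b_2^*.
\]
Since $b_1 \perp b_2^*$, this immediately gives the Pythagorean bound $\|g\|^2 \geq s_2^2 \|b_2^*\|^2$, i.e.\ $|s_2| \leq \|g\|/\|b_2^*\|$. This is the whole geometric content; everything else is plugging in inequalities.

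Next, I would insert the definition of $G$ and the covolume formula. By hypothesis $g \in G$, so $\|g\| < (4/\pi)\,C\,\mathrm{covol}(I)$, and in dimension 2 we have $\mathrm{covol}(I) = \|b_1\|\cdot\|b_2^*\|$. Combining these,
\[
|s_2| \leq \frac{\|g\|}{\|b_2^*\|} < \frac{(4/\pi)\,C\,\|b_1\|\,\|b_2^*\|}{\|b_2^*\|} = \frac{4}{\pi}\,C\,\|b_1\|.
\]
Now the third condition of $(\star)$ says that the shortest vector of $I$ has length strictly less than $\sqrt{2}/C$, and by assumption $\|b_1\|$ equals this shortest length. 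Therefore
\[
|s_2| < \frac{4}{\pi}\cdot C \cdot \frac{\sqrt{2}}{C} = \frac{4\sqrt{2}}{\pi}.
\]

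The final step is the numerical observation that $4\sqrt{2}/\pi < 2$ (since $4\sqrt{2} \approx 5.657$ and $2\pi \approx 6.283$). Because $\{b_1, b_2\}$ is a $\mathbb{Z}$-basis of $I$ and $g \in I$, the coefficient $s_2$ is an integer; together with $|s_2| < 2$ this forces $|s_2| \leq 1$, which is the claim. There is no real obstacle here — the proof is essentially a three-line chain of inequalities — but the only nontrivial point is recognizing that the bound $\|g\| < (4/\pi)\,C\,\mathrm{covol}(I)$ baked into the definition of $G$ was calibrated precisely to make the constant $4\sqrt{2}/\pi$ fall below $2$, so that integrality of $s_2$ can be invoked to finish.
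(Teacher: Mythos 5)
Your proposal is correct and follows essentially the same route as the paper: both arguments rest on the Gram--Schmidt bound $\|g\| \geq |s_2|\,\|b_2^*\|$, the factorization $\mathrm{covol}(I) = \|b_1\|\,\|b_2^*\|$, the bound $\|b_1\| < \sqrt{2}/C$ from $(\star)$, and the numerical fact $4\sqrt{2}/\pi < 2$. The only cosmetic difference is that the paper phrases the final step as a contradiction (assuming $|s_2|\geq 2$) whereas you run the chain of inequalities directly.
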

\begin{proof}
Let $ g = s_1b_1 + s_2 b_2 $ in $G$. As in the proof of Proposition \ref{pr:volI}, we get $\|b_1\| < \sqrt{2}/C$ and  $\|b_2^*\| \leq \sqrt{2}$. By the properties of LLL-reduced bases,  $\|b_2\| \leq  \sqrt{2} \|b_2^*\| \leq 2 $. Therefore,
$$ \frac{4C \co(I)}{\pi} = \frac{4C \|b_1\| \|b_2^*\|}{\pi} < \frac{4\sqrt{2}\|b_2^*\|}{\pi}.$$
Now let $g^*$ be a vector of length equal to the distance from $g$ to the 1-dimensional vector space $\mathbb{R}.b_1$. In other words,  $\|g^*\| = d(g, \mathbb{R}.b_1) = |s_2| \|b_2^*\|$. 
If $|s_2| \geq 2$, then then we would have the following contradiction.
$$\|g\| \geq d(g, \mathbb{R}.b_1) = \|g^*\| \geq 2 \|b_2^*\| > \frac{4\sqrt{2} \|b_2^*\|}{\pi} > \frac{4}{ \pi} C \co(I) . $$
Thus  $|s_2| \leq 1$. 
\end{proof}

 In the next proposition, we prove that the cardinality of $G$ is bounded by a number that depends only on $C$ but not on $I$ or the number field $F$.

\begin{lem}\label{lem:carG}
The number of vectors in $G$ (up to sign)  is less than $ 17 C + 3$. 
\end{lem}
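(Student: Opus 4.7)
The plan is to parametrize each $g\in G$ by its coordinates in the LLL-basis $\{b_1,b_2\}$ and then invoke Proposition \ref{lem:s2} to reduce to three easily countable cases. Concretely, I would write $g = s_1 b_1 + s_2 b_2$ with $s_1,s_2\in\mathbb{Z}$. Proposition \ref{lem:s2} immediately forces $s_2\in\{-1,0,1\}$, and since $G$ is symmetric under $g\mapsto -g$, counting up to sign lets me pick representatives with either $s_2=0$ and $s_1>0$, or $s_2=1$ and $s_1\in\mathbb{Z}$ arbitrary.

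Next I would bound $|s_1|$ via the Gram--Schmidt decomposition $b_2=\mu b_1+b_2^*$, where the LLL property ensures $|\mu|\leq 1/2$. Since $g=(s_1+s_2\mu)b_1+s_2 b_2^*$, the Pythagorean identity gives $\|g\|^2\geq (s_1+s_2\mu)^2\|b_1\|^2$, while the defining bound $\|g\|<\tfrac{4}{\pi}C\operatorname{covol}(I)=\tfrac{4}{\pi}C\|b_1\|\|b_2^*\|$ on $g\in G$ yields
\[
|s_1+s_2\mu|\;<\;\frac{4C}{\pi}\|b_2^*\|\;\leq\;\frac{4\sqrt 2}{\pi}C,
\]
where in the last step I use the bound $\|b_2^*\|\leq\sqrt 2$ already established inside the proof of Proposition \ref{pr:volI}.

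Finally I would count integer points. For $s_2=0$, the positive integers $s_1<4\sqrt 2\, C/\pi$ number at most $\lfloor 4\sqrt 2\, C/\pi\rfloor$. For $s_2=1$, the constraint $|s_1+\mu|<4\sqrt 2\, C/\pi$ places $s_1$ in an open interval of length $8\sqrt 2\, C/\pi$, containing at most $\lfloor 8\sqrt 2\, C/\pi\rfloor+1$ integers. Summing the two contributions, the total number of $g\in G$ up to sign is at most $12\sqrt 2\, C/\pi+2$; since $12\sqrt 2/\pi\approx 5.40$, this is strictly less than $17C+3$ for every $C\geq 1$.

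The only nontrivial ingredients are the LLL bound $|\mu|\leq 1/2$ and the auxiliary estimate $\|b_2^*\|\leq\sqrt 2$, both of which are already in hand, so I do not anticipate any serious obstacle. Note that the constant $17C+3$ appearing in the statement is quite generous relative to what this argument actually delivers.
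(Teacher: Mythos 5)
Your proof is correct and follows essentially the same route as the paper: write $g = s_1 b_1 + s_2 b_2$, invoke Proposition \ref{lem:s2} to force $|s_2|\leq 1$, and count the admissible values of $s_1$ for each choice of $s_2$. The only real difference is in how $s_1$ is bounded --- the paper combines the generic LLL coefficient inequality $|s_1|\leq\sqrt{2}\,(3/2)\,\|g\|/\|b_1\|$ with $\|b_1\|\geq 1/C$ and $\|g\|<8/\pi$, whereas you use the Gram--Schmidt decomposition directly so that $\|b_1\|$ cancels against $\mathrm{covol}(I)=\|b_1\|\,\|b_2^*\|$, giving the sharper count of roughly $5.4\,C+2$ versus the paper's roughly $16.2\,C+3$; both are comfortably below $17C+3$.
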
  
\begin{proof}
Let $g \in G$. Then $ g = s_1 b_1 + s_2 b_2$ for some integers $s_1, s_2$. We have $\|b_1\| \geq  1/C$ and  $\|g\| < 8/\pi$ (by Corollary \ref{corg}).  This implies that 
$$ |s_1| \leq \sqrt{2} \left(\frac{3}{2}\right) \frac{\|g\|}{\|b_1\|} <   \frac{12 \sqrt{2} C}{\pi}$$
{\cite[Section 12]{ref:1}}. By Proposition \ref{lem:s2}, we obtain $ |s_2| \leq 1.$

Consequently,  the number of elements  in $G$ (up to sign) is at most  $ 3 \cdot (\frac{12 \sqrt{2} C}{\pi} + 1) $, which is less than $ 17 C + 3$. 
\end{proof}

The proposition below gives a property of elements in $G'$.

\begin{prop}\label{lem:s11}   
Let $g = s_1 b_1 +  b_2 \in G' $. Then:
\begin{itemize}
\item $|s_1| \leq 2$ or
\item $s_1 \in \{ t_1, t_2\}$ for some integers $t_1 \leq t_2$ in the interval $(-1- 2C ,  1+ 2C)$.
\end{itemize}  
 \end{prop}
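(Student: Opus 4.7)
The plan is to recognize that the defining condition $g \in G'$ forces $g$ to lie on a specific ellipse of $F_{\mathbb{R}}$, which, via the parametrization $g = s_1 b_1 + b_2$, translates into a quadratic equation in $s_1$ with at most two integer roots.

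First, I unfold the condition $g \in G'$: by definition, $B(g) = B_0$ for some $B_0 \in \{B_{min}, B_{max}\}$, i.e.\ $B(g)^4 = -(C^2 g_1^2 - 1)/(C^2 g_2^2 - 1) = B_0^4$. Clearing denominators gives
\begin{equation*}
g_1^2 + B_0^4\, g_2^2 \;=\; \frac{1 + B_0^4}{C^2}.
\end{equation*}
Substituting $g_i = s_1 b_{1,i} + b_{2,i}$ produces the quadratic
\begin{equation*}
(b_{1,1}^2 + B_0^4 b_{1,2}^2)\, s_1^2 + 2(b_{1,1}b_{2,1} + B_0^4 b_{1,2} b_{2,2})\, s_1 + \bigl(b_{2,1}^2 + B_0^4 b_{2,2}^2 - (1+B_0^4)/C^2\bigr) = 0.
\end{equation*}
The leading coefficient is strictly positive (a sum of squares with $b_1 \neq 0$), so this is a genuine quadratic with at most two real, and hence at most two integer, solutions $t_1 \leq t_2$.

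Next, I would prove the a priori bound $|s_1| < 1 + 2C$ for any integer $s_1$ with $g = s_1 b_1 + b_2 \in G$ and $|s_1| > 2$. The starting ingredients are $\|g\| < 8/\pi$ from Corollary \eqref{corg}, the Gram--Schmidt expansion $g = (s_1+\mu) b_1 + b_2^*$ with $|\mu| \leq 1/2$ from LLL-reducedness, and $\|b_1\| > 1/C$ from condition $(\star)$. The elementary estimate $(s_1+\mu)^2 \|b_1\|^2 \leq \|g\|^2$ yields only $|s_1| \leq 8C/\pi + 1/2$, which exceeds $1 + 2C$ for $C \geq 1$. To tighten, I would exploit that $g \in G = G_1 \cup G_2$ forces $|g_i| < 1/C$ for some $i \in \{1,2\}$, restricting $g$ to a thin horizontal or vertical strip rather than merely to a disk; combining this with the LLL lower bound $\|b_2^*\| \geq \|b_1\|/\sqrt{2}$ and the orientation of $b_1$ in $F_{\mathbb{R}}$ should yield the sharper bound $|s_1| < 1 + 2C$.

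Combining both steps gives the dichotomy: if $|s_1| \leq 2$, the first alternative of the proposition holds; otherwise, $s_1$ is forced to be one of the at most two integer roots $t_1 \leq t_2$ of the quadratic above, and these roots lie in $(-1 - 2C,\, 1 + 2C)$ by the bound just derived. The main obstacle will be the refinement of the spherical estimate to $|s_1| < 1 + 2C$: the crude bound $\|g\| < 8/\pi$ alone is insufficient, and the sharpening requires carefully exploiting the strip geometry of $G_1, G_2$ together with the orientation of the LLL-reduced basis $\{b_1, b_2\}$ relative to the coordinate axes of $F_{\mathbb{R}}$.
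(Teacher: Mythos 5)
Your Step 1 is correct but does not do the work the proposition needs, and your Step 2 --- which is where the entire content of the proposition lives --- is not carried out and, as stated, is false. The quadratic $g_1^2+B_0^4g_2^2=(1+B_0^4)/C^2$ indeed has at most two integer roots in $s_1$ for each fixed $B_0$, but its coefficients involve $B_0\in\{B_{min},B_{max}\}$, which are exactly the quantities the algorithm is trying to determine; so these roots are not the computable integers $t_1,t_2$ that Step 4 of the algorithm uses (namely the integers between $\frac{-1/C-b_{22}}{b_{12}}$ and $\frac{1/C-b_{22}}{b_{12}}$, or the analogues with indices swapped), and you would in any case get up to four candidates (two per quadratic), not two. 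More seriously, the ``a priori bound $|s_1|<1+2C$ for any $g=s_1b_1+b_2\in G$'' that you defer to Step 2 does not hold on all of $G$: when $g$ lies in the \emph{same} strip $G_i$ as $b_1$, the only coordinate of $b_1$ bounded below by $1/C$ multiplies $s_1$ in a coordinate of $g$ that is merely bounded by $\|g\|<8/\pi$, which gives only $|s_1|\lesssim (8/\pi+2)C$, worse than $1+2C$; Lemma \eqref{lem:carG} reflects this weaker generic bound.

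The missing idea is the comparison of $B(g)$ with $B(b_1)$, exploiting that $b_1$ itself lies in $G$ and that $g\in G'$ is extremal. The paper splits according to whether $g$ lies in the same set $G_i$ as $b_1$ or in the other one. If $g$ and $b_1$ are both in $G_1$ (say), extremality forces $B(b_1)\le B(g)$, and a short computation shows this fails whenever $\|g\|>\sqrt2/C$; hence $\|g\|\le\sqrt2/C$ and the LLL inequality gives $|s_1|\le 2$ directly --- no strip geometry needed. If instead $g\in G_2$ while $b_1\in G_1$, then $|b_{12}|>1/C$ and the constraint $|s_1b_{12}+b_{22}|<1/C$ confines $s_1$ to an explicit interval of length $\frac{2}{C|b_{12}|}<2$, which contains at most two integers and whose endpoints are bounded by $1+2C$ using $|b_{22}|\le\|b_2\|\le 2$. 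Your plan gestures at the strip argument but never identifies which coordinate of $b_1$ is large, never invokes $b_1\in G$, and never uses the extremality of $g$; without these, neither branch of the dichotomy can be established.
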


\begin{proof}
It is easy to show that $b_1\in G = G_1 \cup G_2$ since $\|b_1\| \leq (4/ \pi) C \co(I)$. Here, we only prove the proposition for $b_1 \in G_1$, so $0<b_{11}<1/C$ and $1/C<|b_{12}|<\sqrt{2}/C$. For $b_1 \in G_2$ , it is sufficient to switch $b_{11}$ and $b_{12}$. In the first case, by definition of $B_{min}$, we obtain $B(b_1) \leq B_{min}$.
\begin{figure}[htbp]
\centering
	\includegraphics[width=8cm]{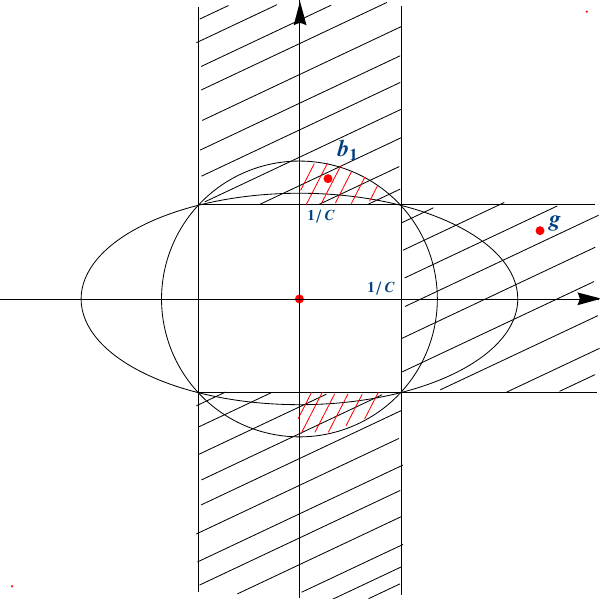}
	\caption[Picture 6]{$b_1$ is in the doubly-shaded area and $g$ is in the shaded area.}
	\label{pic:6}
\end{figure}
The element $g$  is in $G'$, and it belongs to $G_1$ or $G_2$.
\item If $g$ is in $G_1$ then $0<|g_1|<1/C$ and $|g_2|>1/C$. Since $g \in G'$ and $B(b_1) \leq B_{min}$, we also have $B(b_1) \leq B(g)$. If $\|g\| > \sqrt{2}/C$ then $B(b_1) > B(g)$, contradicting the previous inequality. So, $\|g\| \leq \sqrt{2}/C$. With this in mind and the properties of LLL-reduced bases {\cite[Section 12]{ref:1}}, we obtain
 $$|s_1| \leq \sqrt{2}\left(\frac{3}{2}\right)\frac{\|g\|}{\|b_1\|} < \sqrt{2}\left(\frac{3}{2}\right)\left(\frac{\frac{\sqrt{2}}{C}}{\frac{1}{C}}\right)= 3 \qquad \text{ so } |s_1| \leq 2.$$

If $g$ is in $G_2$ then $|g_1|>1/C$ and $|g_2|<1/C$. 
 Since $g = s_1b_1 + b_2$ and $|g_2|<1/C$, the value of $s_1$ is between  $\frac{-1/C-b_{22}}{b_{12}}$ and $\frac{1/C-b_{22}}{b_{12}}$. 
   The fact that $0<|b_{12}|<\sqrt{2}/C$ implies that the distance between these numbers 
   $$\left|\frac{-1/C-b_{22}}{b_{12}} -\frac{1/C-b_{22}}{b_{12}}\right| = \frac{2}{C|b_{12}|}$$ 
   is in the interval $(\sqrt{2},2)$. So, there exist two integers $t_1 \leq t_2$ between these  numbers. Moreover, since $1/C<|b_{12}|<\sqrt{2}/C$ and since $|b_{22}| < \|b_2\| \leq 2$ (see the proof of Proposition \ref{lem:s2}), one can easily see that 
   $$\left| \frac{\pm 1/C-b_{22}}{b_{12}} \right| < 1+ 2 C.$$
   Thus, the bounds for $s_1$ are implied, completing the proof.  

\end{proof}

\section{ Test algorithm for real quadratic fields}\label{sec:alg}
In this section, given $C\geq 1$, we explain an algorithm to test whether a given fractional ideal $I$ is $C$-reduced for a real quadratic field $F$ in time polynomial in $\log|\Delta_F|$ with $\Delta_F$ the discriminant of $F$.

By Proposition \ref{lem:u}, if we know  $B_{min} $ and $B_{max}$, then we can show the existence of a metric $u = (\alpha^{-1}, \alpha)$ in Definition \ref{def:1}. In this algorithm, we first find  all the possible elements of $G' = \{g \in G: B(g) = B_{max} \text{ or } B(g) = B_{min} \}$ and  then compute $B_{min} $ and $B_{max}$. Let $\{b_1, b_2\}$ be an LLL-basis of $I$ and $g = s_1 b_1 + s_2 b_2\in G'$. Then Proposition  \ref{lem:s2} says that $s_2 =0 $ or $s_2 = \pm 1$. By symmetry, it is sufficient to consider only the case $s_2 \in \{ 0, 1\}$.
\begin{itemize}
\item If $s_2 = 0$ then $g=b_1$.
\item If $s_2 = 1 $ then $g = s_1 b_1 + b_2$. By  Proposition \ref{lem:s11}, there are five possible values for $s_1$ in the interval $[-2,2]$ and  two  possible values $ t_1, t_2$ (with $t_1 \leq t_2$) of $s_1$ either between $\frac{-1/C-b_{22}}{b_{12}}$ and $\frac{1/C-b_{22}}{b_{12}}$ or between $\frac{-1/C-b_{21}}{b_{11}}$ and $\frac{1/C-b_{21}}{b_{11}}$. This proposition also shows that the coefficients $s_1$ have absolute values less than $1+ 2 C$.
\end{itemize}
Furthermore, by Proposition \ref{lem:al}, we have $\frac{1}{2 \sqrt{C}} <\alpha < 2 \sqrt{C}$ and so  $\frac{1}{16 C^2} <B(g)^4 < 16 C^2$ for all $g \in G$. In other words:
\[
 (\star \star ) 
  \begin{cases} 
    \text{If } |g_2| < 1/C \text{ then }\\
    \hspace*{1cm} |g_1|^2 + 16 C^2 |g_2|^2 < 16 + \frac{1}{C^2}  \text{ and } |g_2|^2 + 16 C^2 |g_1|^2 > 16 + \frac{1}{C^2}. \\ 
    \text{ If } |g_2| > 1/C \text{ then } \\
    \hspace*{1cm}|g_2|^2 + 16 C^2 |g_2|^2 > 16 + \frac{1}{C^2}  \text{ and } |g_2|^2 + 16 C^2 |g_1|^2 > 16 + \frac{1}{C^2}.
    \end{cases}
    \]

The statements in $(\star \star )$ can be applied to eliminate some elements $g$ which are not in $G'$ without having to compute $B(g)$.

Let $C \geq 1$ and let $I$ be a fractional ideal of $F$. Assume that an LLL-reduced basis $\{ b_1, b_2\}$ of $I$ is also given and change the sign if necessary to have the first component of $b_1 = (b_{11}, b_{12}) \in F_{\mathbb{R}}$ positive. In Remark \ref{input}, we assume that  the coordinates of $b_1$ and $b_2$  have at most $O((\log|\Delta_F|)^a)$ digits for some integer $a>0$. 

We have the following algorithm to test whether $I$ is $C$-reduced in time polynomial in $\log|\Delta_F|$.\\ 
  
\fbox{\begin{minipage}{0.95\textwidth}
 \begin{alg}\label{alg}  \qquad\\
   \begin{itemize}
 \item[1. ] Check if $1 \in I$ and $N(I)^{-1} < C^2 \sqrt{|\Delta_F|}$ or not.
 \item[2. ] Test whether or not $1 \in I$ is primitive.
\item[3. ] Check whether there is no nonzero element of $I$ in \\
$S_1 = \{ (x_1, x_2) \in \mathbb{R}^2: |x_1| \leq  1/C \text{ and } |x_2| \leq 1/C \text{ and } x_1^2 + x_2^2 < 2/C ^2 \}$.
\item[4. ] If $\|b_1\| \geq \sqrt{2}/C$ then $I$ is $C$-reduced. \\
If not, then find all possible elements of $G'$.
\begin{itemize}
   \item  If $0<b_{11}<1/C$ and $1/C<|b_{12}|<\sqrt{2}/C$ then compute the integers $t_1 \leq t_2$ which are  between $\frac{-1/C-b_{22}}{b_{12}}$ and  $\frac{1/C-b_{22}}{b_{12}}$ . 
   \item If $1/C<b_{11}<\sqrt{2}/C$ and $0<|b_{12}|<1/C$ then compute the integers $t_1 \leq t_2$ which are  between $\frac{-1/C-b_{21}}{b_{11}}$ and $\frac{1/C-b_{21}}{b_{11}}$.
 \end{itemize}  
   Let $ G_3 = \{b_1,  t_1 b_1 +b_2,  t_2 b_1 +b_2,  s_1b_1 + b_2 \text{ with } |s_1| \leq 2\} $.   
\item[5. ]  Remove from $G_3 $ all elements which do not satisfy $(\star \star )$.
\item[6. ] Compute $B(g)$ for all $g \in G_3$, and then $B_{max} $ and $B_{min}$.
 
   \end{itemize}
    If $B_{min} \leq B_{max}$ then $I$ is $C$-reduced. If not, then $I$ is not $C$-reduced.     
 \end{alg}

\end{minipage}}\\

 Step 3 of Algorithm \ref{alg} is done in a similar way to testing the minimality of 1 was done (cf.{\cite[Algorithm 10.3]{ref:4}}) but here 1 is replaced by $\frac{1}{C}$. In fact, we have the lemma below.
 
 \begin{lem}\label{sq}
 Step 3 of Algorithm \ref{alg} can be done by checking at most six short vectors of the lattice $I$.
 \end{lem}
 \begin{proof}
 	If $b_1$ is in $S_1$ then $I$ is not $C$-reduced. Otherwise,  $\|b_1\| > \frac{1}{C}$. Assume that $ g= s_1 b_1 +s_2 b_2$ is in $S_1$. Then $g$ has length $\|g\|  < \frac{\sqrt{2}}{C}$. 
 	
 	Since $\{b_1, b_2\}$ is an LLL-reduced basis of $I$, the coefficients $s_1$ and $s_2$ are bounded as
 	$$|s_1| \leq \sqrt{2}\left(\frac{3}{2}\right)\frac{\|g\|}{\|b_1\|} < \sqrt{2}\left(\frac{3}{2}\right)\left(\frac{\frac{\sqrt{2}}{C}}{\frac{1}{C}}\right)=3 $$
 	and $$|s_2| \leq \sqrt{2} \frac{\|g\|}{\|b_1\|} < \sqrt{2} \left(\frac{\frac{\sqrt{2}}{C}}{\frac{1}{C}}\right)=2 $$  
 	{\cite[Section 12]{ref:1}}. Therefore, the elements of $I$ which are in $S_1$ have the form $g= s_1 b_1 +s_2 b_2 $ with $|s_1| \leq 2$ and $|s_2| \leq 1$. By symmetry, it is sufficient to test at most six short elements of $I$.
 \end{proof} 

\begin{prop}
 Algorithm \ref{alg} runs in time polynomial in $\log|\Delta_F|$.
 \end{prop}
\begin{proof}
 The first step can be done in polynomial time in $\log|\Delta_F|$  by Lemma \ref{testnorm}. An LLL-reduced basis of $I$ can be computed in time polynomial in $\log |\Delta_F|$ and Step 2 can be done in time polynomial in $\log|\Delta_F|$ (see Lemma \ref{lem:pri} in Section 2). In Step 3, by Lemma \ref{sq}, it is sufficient to check few short vectors of $I$ which have length bounded by $\frac{\sqrt{2}}{C}$. Step 4 can be done by finding $2$ integer numbers $t_1, t_2$ which are in the interval $[-1 -2 C , 1+ 2 C]$.
 In Step 6, the bounds $B(g)$ are between $\frac{1}{2 \sqrt{C}} $ and  $2 \sqrt{C}$. Overall, this algorithm runs in time polynomial in $\log|\Delta_F|$.
\end{proof}

\section{ A counterexample}
By Lemma \ref{testnorm}, \ref{lem:pri} and \ref{sq}, the first three steps of Algorithm \ref{alg} can be done in time polynomial in $\log|\Delta_F|$. Essentially, the last three steps require finding all elements of $I$ in a certain subset $G$ (see Proposition \ref{lem:limit2} and Lemma \ref{lem:limit3}). Therefore, the complexity of this algorithm is proportional to the cardinality of $G$.

 For real quadratic fields,  the task can be reduced to finding all elements of the subset $G'$ of $G$ by Proposition \ref{lem:u}. Since Proposition \ref{lem:s2} and \ref{lem:s11} say that $G'$ have few elements and it is easy to compute them, Algorithm \ref{alg} works well, i.e., it runs in time polynomial in $\log|\Delta_F|$.

 However, for a number field of degree at least $3$, the set $G$ may have many elements, and we currently do not know how to reduce $G$ to a smaller subset. Therefore, an algorithm similar to Algorithm \ref{alg} would be inefficient. In other words, in bad cases, the complexity of Step 4--6 of Algorithm \ref{alg} may reach $|\Delta_F|^a$ for some $a>0$. In this section, we provide an example of a real cubic field $F$ with large discriminant $\Delta_F$ for which $G$ has at least $|\Delta_F|^{1/4}$ elements.

 Since $F$ is a real cubic field, we have $F_{\mathbb{R}} \cong \mathbb{R}^3$. Let $I$ be a fractional ideal of $F$. Then we identify each element $g \in I$ with its image $(\sigma(g))_{\sigma} =(g_1, g_2, g_3) \in F_{\mathbb{R}} \cong \mathbb{R}^3$.

 We set 
 $$\delta(I,C) = \frac{6 }{ \pi} C^2 \co(I)$$
  and let  
   $$S_1 = \left\{(x_1,x_2,x_3) \in \mathbb{R}^3: |x_i| \leq 1/C, 1 \leq i \leq 3 \text{ and } x_1^2+ x_2^2 + x_2^2 < 3/C^2 \right\},$$   
   $$G =\{g =(g_1, g_2, g_3) \in I: \|g\| < \delta(I,C) \text{ and } \hspace*{5cm}$$
   $$ \hspace*{6cm} \text{ there exists } i \text{ such that }  |g_i|<1/C  \}.$$   
   Let $E_1$ be the sphere centered at the origin of radius $\frac{\sqrt{3}}{C}$.
  As condition $(\star) $ for the quadratic case (see Remark \ref{re:cases}), we assume that $ 1$ is primitive in $I$ and $I$ contains no nonzero element of $S_1$ but the shortest vectors of $I$ are inside $E_1$.

 Proposition \ref{pr:geo} and \ref{lem:B} for the quadratic case can be naturally generalized to a real cubic field. Similar to Proposition \ref{lem:limit2}, we have the following result.

 \begin{lem}\label{lem:limit3}  The second condition of Definition \ref{def:1} is equivalent to: 
 there exists a metric $u \in \left(\mathbb{R}_{>0}\right)^3$  such that 
      $  \|1\|_u \leq C\|g\|_u $ for all $ g \in G $.
  \end{lem}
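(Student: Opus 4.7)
The plan is to mirror the proof of Proposition \ref{lem:limit2} in three dimensions. Since $G \subset I \setminus \{0\}$, the forward implication is immediate, so I concentrate on the converse.

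The first step is to lift Proposition \ref{pr:geo} and Proposition \ref{lem:B} to the cubic setting. For $u = (u_1, u_2, u_3) \in (\mathbb{R}_{>0})^3$, the inequality $\|1\|_u \leq C\|g\|_u$ translates into the geometric statement that $g$ lies outside the open origin-centred ellipsoid
$$E_4^u\colon\ \sum_{i=1}^{3} u_i^2 x_i^2 < \frac{u_1^2 + u_2^2 + u_3^2}{C^2}.$$
This ellipsoid passes through the eight vertices of the cube $S_1$ and has semi-axes $a_i = \sqrt{u_1^2+u_2^2+u_3^2}/(Cu_i) > 1/C$. Applying Minkowski's theorem to $E_4^u$---whose volume is $\tfrac{4\pi}{3}a_1 a_2 a_3 < 2^3 \,\mathrm{covol}(I)$ whenever its interior contains no nonzero lattice point---together with $a_j > 1/C$ for the two indices $j \neq i$, yields $a_i < \tfrac{6}{\pi} C^2 \,\mathrm{covol}(I) = \delta(I,C)$. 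Hence every such admissible $E_4$ sits inside the sphere $E_5$ of radius $\delta(I,C)$.

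With this bound in hand, I would partition $I \setminus \{0\}$ into three pieces and verify $\|1\|_u \leq C\|g\|_u$ separately on each.
\begin{itemize}
\item If $|g_i| \geq 1/C$ for every $i$, then $\|ug\|^2 = \sum u_i^2 g_i^2 \geq \tfrac{1}{C^2}\|u\|^2$, so the inequality is automatic for any $u \in (\mathbb{R}_{>0})^3$.
\item If some $|g_i| < 1/C$ and $\|g\| < \delta(I,C)$, then $g \in G$ and the hypothesis applies directly.
\item If some $|g_i| < 1/C$ but $\|g\| \geq \delta(I,C)$, then $g$ lies outside $E_5$, hence outside every admissible ellipsoid $E_4$, so the geometric reformulation forces the inequality.
\end{itemize}
Under the standing assumption $(\star)$ (primitivity of $1$ in $I$ and $I \cap S_1 = \{0\}$), these three cases exhaust $I \setminus \{0\}$, producing the desired equivalence.

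The main obstacle is the third case, where one must use the upper bound on the semi-axes of $E_4^u$ to force large lattice vectors out of its interior. I expect the crux to be the cubic analog of Proposition \ref{lem:B}---the three-dimensional Minkowski estimate giving $a_i < \delta(I,C)$---since case (iii) relies on it to conclude that any admissible $E_4$ lies inside $E_5$, exactly as the circle $E_5$ plays this role in the two-dimensional argument.
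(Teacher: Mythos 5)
Your proposal is correct and follows essentially the same route as the paper: the paper proves Lemma 5.1 only by reference (``similar to Proposition \ref{lem:limit2}''), and your argument is the faithful three-dimensional translation of that proof --- the geometric reformulation via the ellipsoid through the vertices of $S_1$, the Minkowski volume bound forcing every admissible ellipsoid's semi-axes below $\tfrac{6}{\pi}C^2\,\mathrm{covol}(I)=\delta(I,C)$ (hence containment in the sphere $E_5$), and the same three-way case split on $I\setminus\{0\}$. The constant you derive matches the paper's definition of $\delta(I,C)$ exactly, and your case analysis carries the same level of rigor as the paper's own two-dimensional argument.
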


Let $\{b_1,  b_2, b_3 \} $  be an LLL-basis of $I$. We give an example with $C=1$.
\subsection{ An example}\label{sec:ex} 
Let $P(X) = 10000000019 X^3 + 10218400019 X^2 - 8813199073 X - 4923977196$ be an irreducible polynomial with a root $\beta$ and $F= \mathbb{Q}(\beta)$. Then $F$ is a real cubic field with discriminant 
 $$\Delta_F = 70862499223222398531211367826392679055149> 7 \cdot 10^{40}.$$ 
Denote by $O_F$ the ring of integers of $F$. Let $I = O_F + O_F \beta$. Then the fractional ideal $I$ has the properties that:
 	 \begin{itemize} 
 	     \item $1$ is primitive in $I$.	 	 	  	 
 	  	 \item $I$ has no nonzero element in the cube $S_1$.
 	  	 \item $b_1$ is inside $E_1$ so are the shortest vectors of $I$.
 	  	 \item The covolume of $I$ is greater than 	 
 	  	 $ 1.6 \cdot |\Delta_F|^{1/4}$. 	   	  
 	 \end{itemize} 	 
The cardinality of $G$ is at least  $ 1.7 \cdot 10^{10} >  |\Delta_F|^{1/4} $.

\subsection{How to find the above example}
We construct a real cubic field $F$ with a fractional ideal $I$ satisfying the conditions of Section \ref{sec:ex}.

   Let $C \geq 1$. Assume that $F= \mathbb{Q}(\beta)$ for some  $\beta$ of length $\|\beta\| < \sqrt{3}/C$ and outside the cube $S_1$. Let $O_F$ be the ring of integers of $F$.   
  Suppose that $I = O_F + O_F \beta$.  Then the shortest vectors of $I$ have length at most $\|\beta\| < \sqrt{3}/C$. 
   
   Denote by $P(X)= a X^3 + b X^2 + c X + d \in \mathbb{Z}[X]$ with $gcd(a,b,c,d) =1$ and $a>0$ an  irreducible polynomial that has a root $\beta$. Let 
   $$R = \mathbb{Z} \oplus \mathbb{Z}(a \beta) \oplus \mathbb{Z} (a \beta^2 + b \beta).$$
  Then $R$ is a multiplier ring. Hence it is an order of $F$ {\cite[Section 12.6]{ref:28}}.

Denote by $ \beta_1 = \beta , \beta_2 $ and $ \beta_3$ the roots of $P(X)$. 
 We can easily choose $P(X)$ such that $O_F=R$. This can be done by using the lemma below.
        
   \begin{lem}
        	If the  discriminant of $P(X)$ is squarefree then  $O_F = R$.
   \end{lem}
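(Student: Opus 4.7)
My plan is to exploit the well-known index formula relating the discriminant of an order to the discriminant of the field: if $R \subseteq \O_F$ is any order of full rank, then $\mathrm{disc}(R) = [\O_F : R]^2 \cdot \Delta_F$. The cited reference already ensures that $R = \mathbb{Z} \oplus \mathbb{Z}(a\beta) \oplus \mathbb{Z}(a\beta^2 + b\beta)$ is an order in $F$, so this relation applies. The whole argument will reduce to the single computation $\mathrm{disc}(R) = \mathrm{disc}(P)$: once this is established, the squarefreeness of $\mathrm{disc}(P)$ forces $[\O_F:R]^2 = 1$ and hence $\O_F = R$.

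The main step is therefore the explicit computation of $\mathrm{disc}(R)$ from the given $\mathbb{Z}$-basis $(1, a\beta, a\beta^2 + b\beta)$. Write $\beta = \beta_1, \beta_2, \beta_3$ for the three roots of $P$ and form the $3\times 3$ matrix $M$ whose $(i,j)$-entry is $\sigma_i$ applied to the $j$-th basis element, so that $\mathrm{disc}(R) = (\det M)^2$. Factor $a$ out of the second column, then subtract $b$ times the (new) second column from the third; the third column collapses to $(a\beta_i^2)_i$, from which another factor of $a$ comes out. What remains is a standard Vandermonde determinant $\det(\beta_i^{j-1})$, whose square equals $\prod_{i<j}(\beta_i - \beta_j)^2$. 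Collecting the two extracted factors gives
\[
\mathrm{disc}(R) \;=\; a^{4}\prod_{i<j}(\beta_i - \beta_j)^2,
\]
which is precisely the formula $\mathrm{disc}(P) = a^{2n-2}\prod_{i<j}(\beta_i - \beta_j)^2$ for $n=3$. Hence $\mathrm{disc}(R) = \mathrm{disc}(P)$.

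The conclusion is then immediate: combining $\mathrm{disc}(P) = [\O_F:R]^2 \cdot \Delta_F$ with the assumption that $\mathrm{disc}(P)$ is squarefree forces $[\O_F:R] = 1$, i.e.\ $\O_F = R$. The only step that could plausibly cause trouble is the bookkeeping in the column reduction (getting the exponent of $a$ right, since $P$ is not monic), so I would double-check it against the general formula $\mathrm{disc}(f) = a_n^{2n-2}\prod_{i<j}(\beta_i-\beta_j)^2$ for a degree-$n$ polynomial with leading coefficient $a_n$; everything else is formal.
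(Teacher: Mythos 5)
Your proof is correct and follows essentially the same route as the paper: both identify $\mathrm{disc}(R)$ with $\mathrm{disc}(P)=a^{4}\prod_{i<j}(\beta_i-\beta_j)^2$ and then invoke $\mathrm{disc}(R)=[\O_F:R]^2\Delta_F$ together with squarefreeness. You merely carry out explicitly the Vandermonde/column-reduction computation that the paper dismisses with ``we can easily see,'' and your bookkeeping of the powers of $a$ is right.
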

    \begin{proof}
        The discriminant of $P(X)$ is $\disc(P)= a^4\prod_{i<j}(\beta_i-\beta_j)^2 $ 
        {\cite[Proposition 3.3.5]{ref:11}}. By computing the discriminant of $R$, we can easily see that it is equal to $\disc(P)$. The result follows since  $[O_F: R]^2 |\disc(P)$. 	
   \end{proof}

       
   \begin{lem}
       If $O_F = R$ then $ N(I^{-1}) =a$.
   \end{lem}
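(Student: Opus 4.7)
The plan is to exhibit an explicit $\mathbb{Z}$-basis of $I$, compare it with the given $\mathbb{Z}$-basis of $O_F=R$, and read off the index $[I:O_F]$; the value $N(I^{-1})=a$ then falls out of the multiplicativity of the ideal norm. The hypothesis $O_F=R$ is used precisely to guarantee that $I$ is a genuine $O_F$-ideal whose norm behaves multiplicatively.

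First I would compute the sublattice $O_F\beta\subset F$ by multiplying each basis element of $R$ by $\beta$ and using the defining relation $a\beta^3+b\beta^2+c\beta+d=0$ to reduce $a\beta^3+b\beta^2=-c\beta-d$. This yields
\[
O_F\beta \;=\; \mathbb{Z}\beta\oplus\mathbb{Z}(a\beta^{2})\oplus\mathbb{Z}(-c\beta-d).
\]
Combining with $R=\mathbb{Z}\oplus\mathbb{Z}(a\beta)\oplus\mathbb{Z}(a\beta^{2}+b\beta)$, the $\mathbb{Z}$-module $I=R+R\beta$ is generated by the six vectors $\{1,\,a\beta,\,a\beta^{2}+b\beta,\,\beta,\,a\beta^{2},\,-c\beta-d\}$. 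Writing these in the $\mathbb{Q}$-basis $\{1,\beta,\beta^{2}\}$ and performing an elementary Hermite reduction on the resulting $3\times 6$ integer matrix, the bottom row has gcd $a$, the middle row has gcd $1$ (because $\beta$ itself appears), and the top row has gcd $1$. I would conclude that $\{1,\beta,a\beta^{2}\}$ is a $\mathbb{Z}$-basis of $I$.

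Next I would express the $O_F$-basis $\{1,\,a\beta,\,a\beta^{2}+b\beta\}$ in terms of the $I$-basis $\{1,\beta,a\beta^{2}\}$. The transition matrix is
\[
\begin{pmatrix} 1 & 0 & 0 \\ 0 & a & b \\ 0 & 0 & 1 \end{pmatrix},
\]
which is triangular with determinant $a$. Therefore $O_F\subset I$ and $[I:O_F]=a$.

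Finally, $I$ is an invertible fractional $O_F$-ideal, so $N(I)\cdot N(I^{-1})=N(O_F)=1$. Since $I\supset O_F$ with index $a$, the (rational) norm of $I$ is $N(I)=1/[I:O_F]=1/a$, whence $N(I^{-1})=a$. The only subtlety in the argument is the bookkeeping of conventions: one must be careful that $N$ extended to fractional ideals satisfies $N(I)=\operatorname{covol}(I)/\operatorname{covol}(O_F)$ and is multiplicative, which requires $I$ to be an ideal of the maximal order, and it is exactly here that the assumption $O_F=R$ enters.
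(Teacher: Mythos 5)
Your proposal is correct and follows essentially the same route as the paper: both establish the explicit $\mathbb{Z}$-basis $\{1,\beta,a\beta^2\}$ of $I$ (the paper simply asserts this is ``easy to see,'' whereas you carry out the reduction of the six generators) and then read off $N(I^{-1})=[I:O_F]=a$ from the triangular transition matrix. Your extra care about the multiplicativity of the ideal norm is a reasonable elaboration but does not change the argument.
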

       
   \begin{proof}
       Since $O_F = R = \mathbb{Z} \oplus \mathbb{Z}(a \beta) \oplus \mathbb{Z} (a \beta^2 + b \beta) $ and $I = O_F + O_F \beta$, it is easy to see that 
       $I = \mathbb{Z} \oplus \mathbb{Z} \beta \oplus \mathbb{Z} (a \beta^2)$.        	
       It leads to   $N(I^{-1})=  [I: O_F] = a $ and  the lemma is proved. 
   \end{proof}

 The next lemma says that $a$ can be chosen such that $1$ is primitive in $I$.
   \begin{lem}
       If $a$ is a prime number then $1$ is primitive in $I$.
    \end{lem}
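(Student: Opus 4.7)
The plan is to argue by contradiction, using the previous lemma's norm computation $N(I^{-1}) = a$ together with the characterization of primitivity extracted from the proof of Lemma \ref{lem:pri}. The key input is that $1 \in I$ places $I^{-1}$ inside $O_F$, so $N(I^{-1}) = a$ amounts to $[O_F : I^{-1}] = a$.

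First I would assume for contradiction that $1$ is not primitive in $I$, so some integer $d \geq 2$ satisfies $1/d \in I$. Picking any prime divisor $p$ of $d$, the identity $1/p = (d/p)(1/d)$ shows $1/p \in I$ as well, so it is enough to derive a contradiction in the prime case. By the equivalence established in the proof of Lemma \ref{lem:pri}, the condition $1/p \in I$ is the same as $I^{-1} \subset p \, O_F$.

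Next I would take indices in the resulting chain $I^{-1} \subset p \, O_F \subset O_F$. Because $O_F$ is $\mathbb{Z}$-free of rank $[F:\mathbb{Q}] = 3$, the quotient $O_F/p\,O_F \cong (\mathbb{Z}/p\mathbb{Z})^3$ has order $p^3$, so multiplicativity of indices gives $a = [O_F : I^{-1}] = p^3 \cdot [p\,O_F : I^{-1}]$, forcing $p^3 \mid a$. If $a$ is prime then no prime cube $p^3 \geq 8$ can divide $a$, and this contradiction completes the argument.

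The main point to watch, more a subtlety than a real obstacle, is the two reductions that make the primality of $a$ exactly the right hypothesis: first the passage from ``no integer $d \geq 2$ divides $1$ in $I$'' to the prime case, and second the standard identity $[O_F : p\,O_F] = p^{[F:\mathbb{Q}]}$. Both are routine but they are what let the single assumption ``$a$ prime'' kill every possible obstruction to primitivity at once.
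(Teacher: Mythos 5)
Your argument is correct and is essentially the paper's own proof in expanded form: the paper likewise assumes $\frac{1}{d}\in I$ for some $d\ge 2$ and observes that $N(d)=d^{3}$ must divide $N(I^{-1})=a$, which is impossible for $a$ prime, and your index computation $[O_F:I^{-1}]=p^{3}\,[pO_F:I^{-1}]$ is exactly the justification of that divisibility. The preliminary reduction to a prime divisor $p$ of $d$ is harmless but not needed, since $d^{3}\mid a$ already fails for any $d\ge 2$.
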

         
     \begin{proof}
         If there is an integer $d \ge 2$ such that $1/d \in I$, then $ d^3= N(d)| N(I^{-1})=a$, impossible since $a$  is a prime. Thus, $1$ is primitive in $I$. 
     \end{proof}

Let $\{b_1 = (b_{11}, b_{12}, b_{13}), b_2 = (b_{21}, b_{22}, b_{23}), b_3 = (b_{31}, b_{32}, b_{33})\} \subset \mathbb{R}^3 \subset F_{\mathbb{R}}$ and $\{b_1^*, b_2^*, b_3^* \}$   the Gram-Schmidt orthogonalization of this basis. We have the following result, crucial to obtaining the example of Section \ref{sec:ex}.
           
\begin{prop}\label{pro:cardG}
    Let $C \geq 1$. Assume that:
    \begin{itemize}
        \item $1$ is primitive in $I$.
        \item  $I$ has no nonzero elements in the cube $S_1$.
        \item $\|b_1\| < \sqrt{3}/C$.
        \item $\co(I) \ge 10$.
    \end{itemize}
    Then the cardinality of $G$ is at least $\frac{2}{3} C^2 \co(I)$.
\end{prop}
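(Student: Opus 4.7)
The plan is to lower-bound $|G|$ by exhibiting a large convex, centrally symmetric subset of the region defining $G$ and then applying a classical Minkowski-type lattice-point count. First, I would take
$$T = \bigl\{x \in \mathbb{R}^3 : \|x\| < \delta(I,C),\ |x_3| < 1/C\bigr\},$$
the intersection of the open ball of radius $\delta(I,C) = \frac{6}{\pi} C^2\, covol(I)$ with the slab $|x_3| < 1/C$. Being an intersection of two convex centrally symmetric sets, $T$ itself is convex and centrally symmetric, and every $g \in T \cap I$ automatically lies in $G$ (its third coordinate witnesses the required small-coordinate condition). Thus it suffices to lower bound $|T \cap I|$.

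A direct integration in slabs gives $vol(T) = \frac{2\pi}{C}\bigl(\delta(I,C)^2 - \frac{1}{3C^2}\bigr)$. Since $C \geq 1$ and $covol(I) \geq 10$ force $\delta(I,C) \geq 60/\pi > 1/C$, the subtracted term satisfies $\frac{1}{3C^2} \leq \frac{\delta(I,C)^2}{3}$, and consequently
$$vol(T) \geq \frac{2\pi}{C}\cdot\frac{2\,\delta(I,C)^2}{3} = \frac{4\pi\,\delta(I,C)^2}{3C}.$$

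Next, I would invoke van der Corput's extension of Minkowski's first theorem: for any convex, centrally symmetric body $K$ and any lattice $\Lambda$ in $\mathbb{R}^n$, one has $|K\cap\Lambda| \geq \frac{vol(K)}{2^{n-1}\,covol(\Lambda)} - 1$. Applied to $T$ and $I$ in $\mathbb{R}^3$ and combined with the volume bound above, together with $\delta(I,C)^2 = \frac{36}{\pi^2} C^4\, covol(I)^2$, this yields
$$|G| \geq |T \cap I| \geq \frac{vol(T)}{4\,covol(I)} - 1 \geq \frac{12\,C^3\,covol(I)}{\pi} - 1.$$
Finally, the hypotheses $C \geq 1$ and $covol(I) \geq 10$ give $\bigl(\frac{12C}{\pi} - \frac{2}{3}\bigr) C^2\,covol(I) \geq \bigl(\frac{12}{\pi} - \frac{2}{3}\bigr)\cdot 10 > 1$, which rearranges to the desired inequality $|G| \geq \frac{2}{3}\,C^2\,covol(I)$.

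The main obstacle is calibrating the constants so that the crude $-1$ loss coming from van der Corput is absorbed; this is exactly the role of the hypothesis $covol(I) \geq 10$. I expect the other three hypotheses (primitivity of $1$, absence of $I$-points in $S_1$, and $\|b_1\| < \sqrt{3}/C$) to play no role in this lower bound itself---they are imposed to ensure we are in the geometrically interesting regime where the quadratic-case reasoning of Section~3 breaks down and the test algorithm of Section~4 becomes inefficient, which is the actual purpose of the counterexample.
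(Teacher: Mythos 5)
Your proof is correct, but it follows a genuinely different route from the paper. The paper argues constructively with the LLL basis: it picks a coordinate $j$ with $|b_{1j}|\geq \frac{1}{C}$ (using the hypothesis that $I$ avoids $S_1$), and for each integer $s_2$ with $|s_2|\leq \frac13 C^2 covol(I)$ it finds an integer $s_1$ making $|g_j|<\frac1C$ for $g=s_1b_1+s_2b_2$; the remaining hypotheses (primitivity of $1$, $\|b_1\|<\frac{\sqrt3}{C}$, $covol(I)\geq 10$) are used to bound $\|b_2\|\leq\frac{\sqrt{15}}{2}$ and hence to verify $\|g\|<\delta(I,C)$. You instead apply van der Corput's refinement of Minkowski's theorem to the convex symmetric body $T=B(0,\delta(I,C))\cap\{|x_3|<\frac1C\}$; your volume computation, the absorption of the $\frac{1}{3C^2}$ term, and the final numerical comparison $\bigl(\frac{12C}{\pi}-\frac23\bigr)C^2covol(I)>1$ all check out, and in fact you obtain the stronger bound $|G|\geq \frac{12}{\pi}C^3covol(I)-1$. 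What each approach buys: yours is shorter, basis-free, and makes transparent that only $covol(I)\geq 10$ (together with $\delta(I,C)\geq\frac1C$) is needed for the counting statement itself --- the other hypotheses serve only to place $I$ in the regime relevant to the counterexample, exactly as you observe; the paper's version is constructive (it exhibits the elements $s_1b_1+s_2b_2$ explicitly, which is convenient when one actually wants to produce and verify the example) and stays within the elementary LLL estimates used throughout Sections 3--4. Two cosmetic points: the origin lies in $T\cap I$, so if one reads $G$ as consisting of nonzero vectors you should subtract one more from your count (the slack absorbs this comfortably); and you should state the version of van der Corput you are using, namely that $vol(K)>m\,2^n covol(\Lambda)$ forces at least $m$ pairs $\pm x$ of nonzero lattice points in $K$, from which your inequality $|K\cap\Lambda|\geq \frac{vol(K)}{2^{n-1}covol(\Lambda)}-1$ follows.
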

                      
 \begin{proof}           	
As $I$ has no nonzero element in $S_1$, there is some coordinate $b_{1j}$ with $1 \leq j \leq 3$ of $b_1$ such that $|b_{1j}|\geq 1/C$.           	
Let $g =s_1 b_1 + s_2 b_2 = (g_1, g_2, g_3) $.        
We show that if $|s_2| \leq \frac{1}{3} C^2 \co(I)$ and if $s_1$ is between two numbers $\frac{1}{b_{1j}}(1/C-s_2 b_{2j} )$ and $\frac{1}{b_{1j}}(-1/C-s_2 b_{2j} )$, then $g$ is in $G$.       
                      	
 We know that  $\|b_1\| < \sqrt{3}/C$, hence $|b_{1j}| <\sqrt{3}/C$. This means that for each $s_2$, the distance between $\frac{1}{b_{1j}}(1/C-s_2 b_{2j} )$ and $\frac{1}{b_{1j}}(-1/C-s_2 b_{2j} )$ is  greater than $ 2/\sqrt{3}>1 $. Therefore there is at least one integer $s_1$  between them.　    	
           	
 The bound for $s_1$ implies that $|g_j|< \frac{1}{C}$. To prove that $g \in G$, it is sufficient to prove that $\|g\|< \delta(I,C)$.
           	
We first show that  $\|b_2\| \le \sqrt{3}$. 
Since  $1$ is in $I$, there exist integers $m_1, m_2$ and $m_3$ such that $1 = m_1 b_1 + m_2 b_2 + m_3 b_3$. If $m_3 = m_2 = 0$  then  $1= m_1 b_1$. It follows that $1/m_1 = b_1 \in I$. Since $1$ is primitive, we must have $m_1 = \pm 1$.  So, $ \|b_1\| = \|1\| = \sqrt{3} \geq \frac{\sqrt{3}}{C}$ for any $C \geq 1$. This contradicts $ \|b_1\| <\sqrt{3}/C$. As a result,  $m_3 \neq 0$ or $m_2 \neq 0$.         
If $m_3 \neq 0$, then $\|b_3^*\| \leq \frac{1}{m_3} \|1\| \leq \sqrt{3}$. By the properties of LLL-reduced bases {\cite[Section 12]{ref:1}}, we have $\|b_2^*\| \leq \sqrt{2} \|b_3^*\| \leq \sqrt{6}$. 
Then 
$$\co(I) = \|b_1\| \|b_2^*\|  \|b_3^*\| < \frac{\sqrt{3}}{C} \cdot\sqrt{6} \cdot \sqrt{3} =  \frac{3\sqrt{6}}{C},$$
contrary to the assumption that $\co(I) \ge 10$. Hence, $m_3=0$ and $m_2 \neq 0$. Consequently,     
           	$\|b_2^*\| \leq \frac{1}{|m_2|} \|1\| \leq \sqrt{3}$. 
           	
Next, we prove that $\|b_2\| \leq \frac{\sqrt{15}}{2}$. Indeed, denoting $\mu = \langle b_2, b_1 \rangle/\langle b_1, b_1 \rangle$, by the properties of LLL-reduced bases we have $|\mu| \leq \frac{1}{2}$ and $b_2 = b_2^* + \mu b_1$  {\cite[Section 12]{ref:1}}. It follows that 
$$\|b_2\|^2 = \|b_2^*\|^2 + \mu^2 \|b_1\|^2 < 3 + \frac{1}{4} \frac{3}{C^2} \leq \frac{15}{4}.$$ 
           	
Now, since $|b_{1j}|\geq \frac{1}{C}$ and $|b_{2j}| \leq \|b_2\| \leq \frac{\sqrt{15}}{2} $, the two numbers $\frac{1}{b_{1j}}(1/C-s_2 b_{2j} )$ and $\frac{1}{b_{1j}}(-1/C-s_2 b_{2j} )$ are in the interval 
$$\left[-(1+ \frac{\sqrt{15}}{2}|s_2| ) C, \hspace*{0.2cm} (1+ \frac{\sqrt{15}}{2}|s_2| ) C \right]$$
 and so is $s_1$. Therefore
$$\|g\|^2 = \|(s_1+ \mu s_2) b_1 + s_2 b_2^*\|^2 = (s_1+ \mu s_2)^2 \|b_1\|^2 + |s_2|^2 \|b_2\|^2$$ 
   $$ < \left( \left(1+ \frac{\sqrt{15}}{2} |s_2| \right) C+ \frac{1}{2} |s_2| \right)^2  \frac{3}{C^2} + 3 s_2^2  $$
   $$ \leq 3\left( 1+ \frac{1+\sqrt{15}}{2} |s_2| \right)^2   + 3 s_2^2 < [\delta(I,C)]^2 $$
  since $ |s_2| \leq \frac{1}{3} C^2 \co(I) \text{ and } \co(I) \ge 10$.
           	        	
 We have shown that $g = s_1 b_1 + s_2 b_2\in G$ for all $(s_1, s_2) \in \mathbb{Z}^2\backslash \{(0,0)\}$ with $|s_2| \leq \frac{1}{3} C^2 \co(I)$ and $s_1$ between $\frac{1}{b_{1j}}(1/C-s_2 b_{2j} )$ and $\frac{1}{b_{1j}}(-1/C-s_2 b_{2j} )$. Furthermore, if $g \in G$, then $-g \in G$. Thus,  $G$ has at least $[2 \cdot \frac{1}{3} C^2 \co(I) = \frac{2}{3} C^2 \co(I)]$ elements. 
\end{proof}

\begin{cor}
    With the assumptions in Proposition \ref{pro:cardG}, the set $G$ contains more than  $\gamma  C^2 |\Delta_F|^{1/4}$ elements for some constant $\gamma$ depending on the roots $\beta_1, \beta_2, \beta_3 $ of $P$.
\end{cor}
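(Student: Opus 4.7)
The plan is to apply Proposition \eqref{pro:cardG} directly and then use the construction from Subsection 5.2 to express $\mathrm{covol}(I)$ in terms of $|\Delta_F|^{1/4}$ and the roots of $P$. Proposition \eqref{pro:cardG} already gives $|G| \geq \tfrac{2}{3} C^2 \mathrm{covol}(I)$, so the whole game is to produce a lower bound for $\mathrm{covol}(I)$ of the form $\gamma' |\Delta_F|^{1/4}$ with $\gamma'$ depending only on $\beta_1, \beta_2, \beta_3$.

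To do this, I would first recall the standard identity $\mathrm{covol}(I) = N(I)\sqrt{|\Delta_F|}$ valid for any fractional ideal of a number field of degree $n$. Combining this with the earlier lemma that $N(I^{-1}) = a$ (which holds since the construction forces $O_F = R$ via the squarefree-discriminant lemma), I get the clean formula
\[
\mathrm{covol}(I) \;=\; \frac{\sqrt{|\Delta_F|}}{a}.
\]

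Next I would eliminate $a$ using the discriminant identity. Since $O_F = R$ and $\mathrm{disc}(R) = \mathrm{disc}(P)$, we have
\[
|\Delta_F| \;=\; |\mathrm{disc}(P)| \;=\; a^4 \prod_{i<j} (\beta_i - \beta_j)^2,
\]
so solving for $a$ gives $a = |\Delta_F|^{1/4} \Big/ \prod_{i<j}|\beta_i - \beta_j|^{1/2}$. Substituting this into the covolume formula yields
\[
\mathrm{covol}(I) \;=\; |\Delta_F|^{1/4}\, \prod_{i<j} |\beta_i - \beta_j|^{1/2}.
\]

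Finally, I would plug this expression into the bound from Proposition \eqref{pro:cardG}, obtaining
\[
|G| \;\geq\; \tfrac{2}{3} C^2 \mathrm{covol}(I) \;=\; \tfrac{2}{3}\, \prod_{i<j} |\beta_i - \beta_j|^{1/2} \cdot C^2 |\Delta_F|^{1/4},
\]
and setting $\gamma := \tfrac{2}{3} \prod_{i<j}|\beta_i - \beta_j|^{1/2}$, which depends only on the roots of $P$. The only real bookkeeping issue is making sure that the hypotheses of Proposition \eqref{pro:cardG} (in particular the $\mathrm{covol}(I) \geq 10$ assumption) are compatible with the setup — but since we are free to choose $P$ with large $|\Delta_F|$, the expression for $\mathrm{covol}(I)$ grows with $|\Delta_F|^{1/4}$ and this hypothesis is automatic in the asymptotic regime where the corollary is meaningful. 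There is no substantial obstacle here; the proof is essentially an algebraic substitution.
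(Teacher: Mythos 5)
Your proof is correct and follows essentially the same route as the paper: combine $\mathrm{covol}(I)=\sqrt{|\Delta_F|}/N(I^{-1})=\sqrt{|\Delta_F|}/a$ with $|\Delta_F|=\mathrm{disc}(P)=a^4\prod_{i<j}(\beta_i-\beta_j)^2$ to get $\mathrm{covol}(I)=|\Delta_F|^{1/4}\prod_{i<j}|\beta_i-\beta_j|^{1/2}$, then invoke Proposition \eqref{pro:cardG}. Your remark that the $\mathrm{covol}(I)\geq 10$ hypothesis holds for large $|\Delta_F|$ is a small point the paper leaves implicit, but the argument is the same algebraic substitution.
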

    
 \begin{proof}         
    	By choosing $P$ such that $O_F = R$, we have 
    	$$|\Delta_F| = \disc(R)=\disc(P)= a^4\prod_{i<j}(\beta_i-\beta_j)^2.$$
    	 Hence
    	 $$a = \frac{1}{\gamma} |\Delta_F|^{1/4} \text{ with }  \gamma = \left(\prod_{i<j}(\beta_i-\beta_j)^2 \right)^{1/4}.$$ 
    	 Consequently, 
    	$$ \co(I) = \frac{\sqrt{|\Delta_F|}}{N(I^{-1})} = \frac{|\Delta_F|^{1/2}}{a}  =\gamma |\Delta_F|^{1/4}$$ 
    	and the result follows from Proposition \ref{pro:cardG}.
 \end{proof}

   \begin{re}
     \emph{ Almost all the lattices $I$ constructed this way have no nonzero element in the cube $S_1$} as we may expect. Indeed, any element $g= s_1 b_1 + s_2 b_2 +s_3 b_3 \in     I \cap S_1$ has length at most $\sqrt{3}/C$. So, we can bound for the coefficients $s_1 s_2, s_3$ as follows  {\cite[Section 12]{ref:1}}.     
     $$|s_1| \leq  2\left(\frac{3}{2}\right)^{2}\frac{\|g\|}{\|b_1\|},  \qquad  |s_2| \leq 2\left(\frac{3}{2}\right)\frac{\|g\|}{\|b_2^*\|},      \qquad   |s_3| \leq 2\frac{\|g\|}{\|b_3^*\|}.$$ 
   Therefore,the the cardinality of $I \cap S_1$ is bounded by
      $$\frac{1}{\co(I)} \cdot \left(\frac{\sqrt{3}}{C}\right)^{3} \cdot \text{ (a constant ) }$$
      {\cite[Section 12]{ref:1}}.
   Since the covolume of $I$ is very large,  this number is very small. So, usually we can get $I$ without any nonzero elements in $S_1$.
     \end{re}

    From the idea above, some examples like the one in \ref{sec:ex} can be produced as follows.
         \begin{itemize}
        	 \item First choose the discriminant $|\Delta_F|$ of $F$ such that $|\Delta_F| > 10^{4}$ (to make sure that $\co(I) \ge 10$). 
        	 \item Choose a prime number $a \approx  |\Delta_F|^{1/4}$ (such that $1$ is primitive in $I$). 
         	 \item Chose a real vector $(\beta_1, \beta_2, \beta_3)$  outside $S_1$ and 
         	 such that 
         	 $$\frac{1}{C^2}<\beta_1^2+ \beta_2^2+ \beta_3^2<\frac{3}{C^2}.$$ 
         	 \item Find the polynomial $P(X) = a X^3 + b X^2 + c X +  d \in \mathbb{Z}[X]$ of the form $a (X-\beta_1) (X-\beta_2) (X-\beta_3)$ (this can be done by using the function \texttt{round} in \texttt{pari-gp}). Then check whether $P(X)$ is irreducible.\\
         	  Check if $\disc(P)$ is squarefree. If not then change $\beta_i$ until it is. Now $O_F = R$.
         	 \item Let $I = O_F + O_F \beta$. Compute an LLL-reduced basis $\{ b_1, b_2, b_3\} $ of $I$ and check if $\|b_1\| < \sqrt{3}/C$.
         	 \item Test whether $I$ does not have any nonzero element in $S_1$.
        \end{itemize}


\subsection*{Acknowledgements}
I would like to thank Ren\'{e} Schoof for proposing a new definition of $C$-reduced divisors as well as very valuable comments and Hendrik W. Lenstra for helping me to find the counterexample in Section 5. I am also immensely grateful to Wen-Ching Li and National Center for Theoretical Sciences (NCTS) for hospitality during a part of the time when this paper is written. I also would like to thank  Duong Hoang Dung and Chloe Martindale for useful comments. 
Moreover, I wish to thank the reviewers for their comments that helped improve the manuscript.

This research was supported by the Universit\`{a} di Roma ``Tor Vergata" and partially supported by the Academy of Finland (grants  $\#$276031, $\#$282938, and $\#$283262). The support from the European Science Foundation under the COST Action IC1104 is also gratefully acknowledged.  





\normalsize
\baselineskip=17pt




\end{document}